\def \ve{\varepsilon}
\def \be{\begin{equation}}
\def \ee{\end{equation}}
\theoremstyle{plain}
\newtheorem{thm}{Theorem}
\newtheorem{lem}[thm]{Lemma}
\font \twbbb= msbm10 scaled \magstep0                 
\font \tenbbb= msbm7 scaled \magstep0                 
\begin{document}

\footnotetext{Mathematics Subject Classification 2000: }

\numberwithin{equation}{section}
\theoremstyle{plain}
\newtheorem{Th}{Theorem}
\newtheorem{Lm}{Lemma}
\newtheorem*{Df*}{Definition}
\newtheorem{Cr}{Corollary}[section]
\renewcommand{\abstractname}{}

\author{
\centerline{\bf A.~Piatnitski} \\  \centerline{Narvik University College, Postboks 385, 8505 Narvik, Norway,}\\
\centerline{P. N. Lebedev Physical Institute of RAS, 53, Leninski pr., Moscow 119991, Russia,}\\
\centerline{e-mail: andrey@sci.lebedev.ru} \and \centerline{\bf
A.~Rybalko}
\\ \centerline{Simon Kuznets Kharkiv National University of Economics,}\\
\centerline{9a Lenin ave.,
Kharkiv 61166, Ukraine,} \\
\centerline{e-mail: n\_rybalko@yahoo.com}
\and \centerline{\bf V.~Rybalko} \\
\centerline{Mathematical Department, B.Verkin Institute for Low Temperature Physics} \\
\centerline{and Engineering of the NASU,
47 Lenin ave., Kharkiv 61103, Ukraine,} \\
\centerline{e-mail: vrybalko@ilt.kharkov.ua}}

\title{Singularly perturbed spectral problems with Neumann boundary conditions}

\maketitle

\begin{abstract}
The paper deals with the Neumann spectral problem for a singularly perturbed second order elliptic
operator with bounded lower order terms. The main goal is to provide a refined description of the
limit behaviour of the principal eigenvalue and eigenfunction. Using the 
logarithmic transformation we reduce the studied problem to additive eigenvalue problem for 
a singularly perturbed Hamilton-Jacobi equation. Then assuming that the Aubry set of the Hamiltonian 
consists of a finite number of points or limit cycles situated in the domain or on its boundary, 
we find the limit of the eigenvalue and formulate the selection criterium that allows us to choose a solution
of the limit Hamilton-Jacobi equation which gives the logarithmic asymptotics of the principal eigenfunction. 
\end{abstract}


\section{Introduction}
This paper is devoted to the asymptotic analysis of the first eigenpair for singularly perturbed spectral problem,
depending on the small parameter $\ve>0$, for the elliptic equation
\begin{equation}
\label{Pr}
\ve a_{ij} (x)\frac{\partial^2 u_\ve} {\partial
x_i\partial x_j} +b_{i} (x)\frac{\partial
u_\ve}{\partial x_i} +c(x) u_\ve=\lambda_\ve u
\end{equation}
in a smooth bounded domain $\Omega \subset \mathbb{R}^N$ with the  boundary condition
\begin{equation}
\frac{\partial u_\ve}{\partial \nu} = 0,
\label{Bound}
\end{equation}
on $\partial\Omega$, where $\frac{\partial}{\partial \nu}$ denotes derivative with
respect to the external normal.

The bottom of the spectrum of elliptic operators plays a crucial role in many applications. In particular, the first eigenvalue
and the corresponding eigenfunction of (\ref{Pr})--(\ref{Bound}), are important in understanding the large-time
behavior of the underlying non-stationary convection-diffusion model with reflecting boundary.
Due to the Krein-Rutman theorem the first eigenvalue $\lambda_\ve$ of (\ref{Pr})--(\ref{Bound})
(the eigenvalue with the maximal real part) is simple and real, the corresponding eigenfunction $u_\ve$
can be chosen to satisfy $u_\ve (x)>0$ in $\Omega$.

The goal of this work is to study the asymptotic behavior of $\lambda_\ve$ and $u_\ve$ as
$\ve\to 0$. While in the case of constant function $c(x)$ in (\ref{Pr}) the first eigenpair
is (trivially) explicitly found, the asymptotic behavior of the first eigenpair is quite nontrivial
when $c(x)$ is a nonconstant function, in particular, the eigenfunction might exhibit an exponential
localization.

Boundary-value problems for singularly perturbed elliptic operators have been actively studied starting
from 1950s. We mention here a pioneering work \cite{ViLu}, where for a wide class of
operators (so-called regularly degenerated operators) the asymptotics of solutions were obtained.

In the works \cite{We72}, \cite{We75}, \cite{Fri73} (see also \cite{WF}) the principal eigenvalue of singularly perturbed convection-diffusion equations with the
Dirichlet boundary condition was investigated by means of large deviation techniques for diffusion processes
with small diffusion. In \cite{DeFr} the estimates for the principal eigenvalue were obtained by comparison
arguments and elliptic techniques.

The case when convection vector field has a finite number of hyperbolic equilibrium points and cycles was
studied in \cite{Kif1_80} where methods of dynamical systems are combined with those of stochastic differential equations. These results were generalized in \cite{Ki87} to the case when the boundary of domain is  invariant
with respect to convection vector field. Similar problem in the presence of zero order term was considered in
\cite{Ki90}.

In \cite{Pe90} the viscosity solutions techniques for singularly perturbed Hamilton-Jacobi equation were used in order to study the principal eigenfunction of the  adjoint Neumann convection-diffusion
problem. The logarithmic asymptotics of the eigenfunction was constructed.

The work \cite{Pi} deals with the principal eigenpair of operators with a large zero order term on a compact Riemannian manifold. The approach developed in this work is based on large deviation and variational techniques.

Dirichlet spectral problem for a singularly perturbed operators with rapidly oscillating locally periodic coefficients
was studied in \cite{PR} and \cite{PRR}. In \cite{PR} with the help of viscosity solutions method the limit of the principal eigenvalue and the logarithmic asymptotics of the principal eigenfunction were found. These asymptotics were improved in \cite{PR} and \cite{PRR} using the blow up analysis.

In the present work when studying problem \eqref{Pr}--\eqref{Bound}, we make use of the standard viscosity solutions techniques in order to obtain the logarithmic asymptotics of the principal eigenfunction. However, the limit Hamilton-Jacobi equation in general is not uniquely solvable and does not give information about the limit behaviour of
$\lambda_\ve$. Therefore, we have to consider higher order approximations in (\ref{Pr})--\eqref{Bound}. Under rather general assumptions on the structure of the Aubry set of the limit Hamiltonian, we find the limit of $\lambda_\ve$ and
can choose the solution of the limit problem which determines the asymptotics of the principal eigenfunction.
Notice that we did not succeed to make the blow up analysis work in the case under consideration. In this case, for components of the Aubry set located on the boundary,  the natural rescaling still leads to a singularly perturbed operators . Instead, we study a refined structure of solutions of the limit Hamilton-Jacobi equation in the vicinity of the Aubry set. This allows us to construct test functions that satisfy the perturbed equation up to higher order.

We also would like to remark that, with obvious modifications, the results of this work as well as the developed techniques remain valid for the boundary condition of the form
$$
\frac{\partial u_\ve}{\partial \beta}=0,
$$
where $\beta$ is a $C^2$-smooth vector field on $\partial\Omega$ non-tangential at any point of  $\partial\Omega$. In particular, conormal vector field  $\beta_i=a_{ij}\nu_j$ can be considered.

\section{Problem setup and results}

We study problem (\ref{Pr})--(\ref{Bound}) under the following assumptions on the operator coefficients and the domain:
\begin{itemize}

\item [({\bf a1})] $\Omega$ is a bounded domain in $\mathbb R^N$, $N\geq 2$, with a $C^2$ boundary;
\item [({\bf a2})] all the coefficients are $C^2$-functions in $\overline\Omega$;
\item [({\bf a3})] the matrix $(a_{ij})$ is symmetric and uniformly elliptic.
\end{itemize}
Further assumptions on the vector field $b$ will be formulated later on.

\bigskip
Since $u_\ve>0$ in $\Omega$
we can represent $u_\ve$ in the form
$$
u_\ve=e^{-W_\ve (x)/\ve},
$$
this results in the following nonlinear PDE
\begin{equation}
\label{Pr1}
- a_{ij} (x)\frac{\partial^2 W_\ve} {\partial
x_i\partial x_j} +\frac{1}{\ve} H(\nabla W_\ve, x) +c(x) =\lambda_\ve
\qquad \text{in} \ \Omega
\end{equation}
or
\begin{equation}
\label{PrII}
- \ve a_{ij} (x)\frac{\partial^2 W_\ve} {\partial
x_i\partial x_j} + H(\nabla W_\ve, x) +\ve c(x) =\ve\lambda_\ve
\qquad \text{in} \ \Omega
\end{equation}
with the boundary condition
\begin{equation}
\label{Bound1}
\frac{\partial W_\ve}{\partial \nu} = 0
\qquad \text{on} \ \partial\Omega,
\end{equation}
where
\begin{equation}
\label{Hamilt}
H(p,x)=a_{ij} (x) p_i p_j- b_{i} (x)p_i
\end{equation}
is a function to be referred to as a Hamiltonian. Passing to the limit as $\ve\to 0$ in \eqref{PrII}, with the
help of the standard approach based on the maximum principle, we
can show that $W_\ve$ converges uniformly (up to extracting a
subsequence) to a viscosity solution $W(x)$ of the
Hamiltion-Jacobi equation
\begin{equation}
\label{HamEq}
 H(\nabla W(x), x) =0
\qquad \text{in} \ \Omega
\end{equation}
with the boundary condition
\begin{equation}
\label{HamBc}
\frac{\partial W}{\partial \nu} = 0
\qquad \text{on} \ \partial\Omega.
\end{equation}

Recall that a function $W\in C(\overline{\Omega})$ is called a viscosity solution of equation
 (\ref{HamEq}) if for every test function $\Phi\in C^{\infty}(\overline{\Omega})$ the following holds
\begin{itemize}
\item if $W-\Phi$ attains a maximum at a point $\xi\in\Omega$ then $W(\nabla \Phi (\xi),\xi) \leq 0$;
\item if $W-\Phi$ attains a minimum at $\xi\in\Omega$ then $W(\nabla \Phi (\xi),\xi) \geq 0$.
\end{itemize}
The boundary condition (\ref{HamBc}) is understood in the following sense, $\forall \Phi\in C^{\infty}(\overline{\Omega})$
\begin{itemize}
\item if $W-\Phi$ attains a maximum at $\xi\in\partial\Omega$ then
$\min \Bigl\{ H(\nabla \Phi (\xi),\xi), \frac{\partial\Phi}{\partial\nu} (\xi) \Bigr\} \leq 0$;
\item if $W-\Phi$ attains a minimum at $\xi\in\partial\Omega$ then
$\max\Bigl\{ H(\nabla \Phi (\xi),\xi), \frac{\partial\Phi}{\partial\nu} (\xi) \Bigr\} \geq 0$.
\end{itemize}

It is known  \cite{I} that every solution of problem
(\ref{HamEq})--(\ref{HamBc}) has the representation
\begin{equation}
\label{MainWRepresent}
W(x)=\inf_{y\in\mathcal{A}_H}\Bigl\{ d_{H} (x,y)+W(y) \Bigr\},
\end{equation}
where $\mathcal{A}_H$  is so-called Aubry set and $ d_{H} (x,y)$ is a distance function. To define $\mathcal{A}_H$ and
 $ d_{H} (x,y)$ consider solutions of Skorohod problem
\begin{equation}
\begin{cases}
\label{Skor}
 \eta(t)\in \overline{\Omega}, t\geq 0 \\
\dot \eta(t)+\alpha (t) \nu (\eta (t))=v(t)  \qquad \text{with} \
\alpha (t)\geq 0 \  \  \text{and} \ \alpha (t)= 0 \ \text{when}
\ \eta(t)\notin\partial\Omega\\
\eta(0)=x
\end{cases}
\end{equation}
where $v\in L^1((0,\infty); \mathbb{R}^N)$ is a given vector field and $x\in\overline{\Omega}$ is a given initial point,
while the curve 
$\eta\in W_{loc}^{1,1}((0,\infty); \mathbb{R}^N)$
and the function $\alpha\in L^1((0,\infty); \mathbb{R}_+)$ are unknowns.
Under our standing assumptions on $\Omega \ (\partial \Omega\in C^2)$    Skorohod problem
 (\ref{Skor}) has a solution, see \cite{I}.

Consider now the Legendre transform $L(v,x)=\sup\limits_{p\in\mathbb{R}^N} (v\cdot p - H(p,x))$
and define the distance function
\begin{equation}
\label{dfunction} d_H (x,y)=\inf \Bigl\{  \int_0^t L(-v(s),\eta (s)
)\,{\rm d}s,\ \ \eta \ \text{solves} \   (\ref{Skor}), \ \eta(0)=x,\
\eta(t)=y, \ t>0 \Bigr\}.
\end{equation}
Next we recall the variational definition of the Aubry set
\begin{equation}
\label{AubryNeumann} x\in \mathcal{A}_H \Longleftrightarrow
\forall \delta
>0 \ \ \inf \Bigl\{  \int_0^t L( -v(s), \eta (s) )\, {\rm d}s,\ \ \eta \
\text{solves}  \ (\ref{Skor}), \ \eta(0)=\eta(t)=x, \ t>\delta
\Bigr\}=0.
\end{equation}

In this work we assume that the Aubry set has finite number of connected components
\begin{multline}
\label{H1}
\mathcal{A}_H=  \bigcup_{\rm finite} \mathcal{A}_k \ and \ each\ \mathcal{A}_k \ is\
either\ isolated \ point\\  or \ closed\ curve\ lying\
entirely\ either\ in\ \Omega\ or\ on\ \partial\Omega.
\end{multline}
Additionally we assume that
\begin{multline}
\label{H2}
if \ \mathcal{A}_k\subset\Omega \ then\ \mathcal{A}_k \ is\ either\ hyperbolic \
fixed\ point\\  or \ hyperbolic \ limit\ cycle\ of\ the\ ODE\ \dot x=b(x);
\end{multline}

\begin{multline}
\label{H3}
if \ \mathcal{A}_k\subset\partial\Omega \ then\ the\ normal\ component \ b_\nu (x)  \ of \ the\ field\ b(x)\
\ is\ strictly \ positive\ on\ \mathcal{A}_k \\
\ and\ \mathcal{A}_k \ is\ either\ hyperbolic \ fixed\ point\  or \ hyperbolic\
limit\ cycle\ of\ the\ ODE\\ \dot x=b_\tau(x)
\ on \ \partial\Omega,\
where\ b_\tau(x) \ denotes\ the\ tangential\ component\ of\ b(x)\ on\ \partial\Omega.
\end{multline}

In order to state the main result of this work we assign to each
component $\mathcal{A}_k $ of $\mathcal{A}_H$ a number $\sigma
(\mathcal{A}_k) $  as follows. If $\mathcal{A}_k $ is a fixed
point $\{\xi\}$ of the ODE $\dot x=b(x)$ and $\xi\in\Omega$,
linearizing the ODE near $\xi$ to get $\dot z=B(\xi)z$ we define
$\sigma (\mathcal{A}_k) $ by
\begin{equation}
\label{Fpoint}
\sigma (\mathcal{A}_k) =-\sum\limits_{\theta_i>0}\theta_i+c(\xi),
\end{equation}
where $\theta_i$ are the real parts of eigenvalues of the
matrix $B(\xi)$. Note that the hyperbolicity of the fixed point
means that the eigenvalues of $B(\xi)$ cannot have zero real part.
If $\mathcal{A}_k=\{\xi\} $ and $\xi\in\partial\Omega$, consider
the ODE $\dot x=b_{\tau}(x)$ on $\partial\Omega$ in a neighborhood
of the point $\xi$. Passing to the linearized ODE $\dot
z=B_{\tau}(\xi)z$ in the tangent plane to  $\partial\Omega$  at
the point $\xi$, we denote by $\tilde\theta_i$ the  real
parts of the eigenvalues of $B_{\tau}(\xi)$ and set
\begin{equation}
\label{Fpoint1}
\sigma (\mathcal{A}_k) =-\sum\limits_{\tilde\theta_i>0}\tilde\theta_i+c(\xi),
\end{equation}

Consider now the case when $\{\mathcal{A}_k\}\subset\Omega$ is a
limit cycle of ODE $\dot x=b(x)$. Let $P>0$ be the minimal period
of the cycle and let $\Theta_i$ be the absolute values of
eigenvalues of the linearized Poincar\'{e} map.
 (Recall that the limit cycle is said hyperbolic if there
are no eigenvalues of linearized  Poincar\'{e} map with absolute
value equal to 1.) We define now $\sigma (\mathcal{A}_k) $ by
setting
\begin{equation}
\label{Lcycle}
\sigma (\mathcal{A}_k) =-\frac{1}{P}\sum\limits_{\Theta_i>1}\log\Theta_i+\frac{1}{P}\int_{0}^{P}c(\xi(t)){\rm d}t,
\end{equation}
where $\xi(t)$ solves $\dot \xi=b(\xi)$ and $\xi(t)\in \mathcal{A}_k$.

Finally,  in the case when $b_{\nu}>0$ on $\mathcal{A}_k$ and $\mathcal{A}_k$ is a limit cycle of the ODE
$\dot x=b_{\tau}(x)$ on $\partial\Omega$, we set
\begin{equation}
\label{Lcycle1}
\sigma (\mathcal{A}_k) =-\frac{1}{P}\sum\limits_{\widetilde\Theta_i>1}
\log\widetilde\Theta_i+\frac{1}{P}\int_{0}^{P}c(\xi(t)){\rm d}t,
\end{equation}
where $\dot \xi=b_{\tau}(\xi)$ and $\xi(t)\in \mathcal{A}_k$, $P$ is the minimal period and $\widetilde\Theta_i$
are the absolute values of the eigenvalues of the linearized Poincar\'{e} map. 

The main result of this work is
\begin{thm}\label{Mainth}
Let conditions ({\bf a1})--({\bf a3}) be fulfilled, and assume that the Aubry set $\mathcal{A}_H$ satisfies
  (\ref{H1}), (\ref{H2}) and  (\ref{H3}). Then the
first eigenvalue $\lambda_\ve$ of (\ref{Pr}) converges as $\ve\to 0$ to
\begin{equation}
\label{Max} \lim_{\ve\to 0}\lambda_\ve=\max\Bigl\{\sigma(
\mathcal{A}_k);  \mathcal{A}_k\subset  \mathcal{A}_H\Bigr\},
\end{equation}
where $\sigma( \mathcal{A}_k)$ is given by  (\ref{Fpoint}) or
(\ref{Fpoint1}) if $\mathcal{A}_k$ is a fixed point
in $\Omega$ or  on $\partial\Omega$, and $\sigma( \mathcal{A}_k)$
is defined by (\ref{Lcycle}) or (\ref{Lcycle1}) if
$\mathcal{A}_k$ is a limit cycle  in $\Omega$ or  on
$\partial\Omega$. Moreover, if the maximum in   (\ref{Max}) is
attained at exactly one component $\mathcal{M}:=\mathcal{A}_{k_0}$, then
the scaled logarithmic transform $w_\ve=-\ve\log u_\ve$ of the
first eigenfunction $u_\ve$ (normalized by max $u_\ve=1$)
converges uniformly in $\overline{\Omega}$ to the maximal
viscosity solution $W$ of (\ref{HamEq})-(\ref{HamBc}) vanishing on
$\mathcal{M}$, i.e. $W(x)=d_H(x,\mathcal{M}).$
\end{thm}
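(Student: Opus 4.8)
The plan is to establish the eigenvalue limit \eqref{Max} and the eigenfunction convergence through matching upper and lower bounds, following the viscosity-solutions strategy outlined in the introduction, but with the additional local analysis near each component of $\mathcal{A}_H$ that the authors announce. First I would recall that $w_\ve = -\ve\log u_\ve = W_\ve$ satisfies \eqref{PrII} with $\ve\lambda_\ve$ on the right-hand side, that the family $\{W_\ve\}$ is equi-Lipschitz and equibounded (this follows from the uniform ellipticity, the boundedness of the coefficients, and barrier arguments of the type in \cite{I, Pe90}), and hence along a subsequence $W_\ve \to W$ uniformly in $\overline\Omega$ and $\ve\lambda_\ve \to 0$, while $\lambda_\ve \to \bar\lambda$ for some $\bar\lambda \in \R$ after extracting a further subsequence. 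The stability of viscosity solutions then gives that $W$ solves \eqref{HamEq}--\eqref{HamBc}, so by \eqref{MainWRepresent} it has the form $W(x) = \inf_{y\in\mathcal{A}_H}\{d_H(x,y) + W(y)\}$; moreover $W$ is constant on each component $\mathcal{A}_k$ (since $d_H$ restricted to a single component vanishes). The whole problem thus reduces to identifying $\bar\lambda$ and the values $W|_{\mathcal{A}_k}$, and showing they are independent of the subsequence.

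The heart of the argument is a two-sided matching near each component $\mathcal{A}_k$, carried out by constructing sub- and supersolutions of \eqref{PrII} that are accurate to order $o(\ve)$ in a neighborhood of $\mathcal{A}_k$. For the upper bound $\bar\lambda \le \max_k \sigma(\mathcal{A}_k)$, I would integrate the equation \eqref{Pr} against a suitable test function, or equivalently use the Donsker–Varadhan/large-deviation variational representation of $\lambda_\ve$ as in \cite{Pi}, localizing mass near the component where the maximum is attained; the quantity $\sigma(\mathcal{A}_k)$ is exactly the ``ergodic constant'' of the linearized problem — for a hyperbolic fixed point $\xi\in\Omega$ the linearized operator $\ve a_{ij}(\xi)\partial_{ij} + (B(\xi)z)_i\partial_i + c(\xi)$ has principal eigenvalue $c(\xi) - \sum_{\theta_i>0}\theta_i + o(1)$ (an Ornstein–Uhlenbeck computation), and similarly for cycles one gets the Floquet/Poincaré-map expression via the Khasminskii-type averaging along the periodic orbit, and for boundary components one works in the tangential chart, using $b_\nu>0$ to show the reflected diffusion is pushed into $\partial\Omega$ so that only $b_\tau$ matters. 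For the lower bound $\bar\lambda \ge \sigma(\mathcal{A}_{k_0})$ at the maximizing component, I would build a positive approximate eigenfunction supported near $\mathcal{A}_{k_0}$ from the corresponding linearized eigenfunction, glued to an exponentially small tail via the Hamilton–Jacobi solution $d_H(\cdot,\mathcal{A}_{k_0})$, and invoke the variational characterization in the opposite direction. Combining the two gives $\bar\lambda = \max_k\sigma(\mathcal{A}_k)$, which pins down the limit independently of the subsequence.

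Once $\bar\lambda$ is known, the selection of $W$ follows: on any component $\mathcal{A}_k$ with $\sigma(\mathcal{A}_k) < \bar\lambda$ one shows the correction term in \eqref{PrII} forces $u_\ve$ to be exponentially smaller near $\mathcal{A}_k$ than near $\mathcal{M}$, so $W(\mathcal{A}_k) > W(\mathcal{M})$; since the normalization $\max u_\ve = 1$ together with the refined local analysis shows $\min_{\overline\Omega} W = 0$ is attained precisely on $\mathcal{M}$ (and the local structure of solutions of \eqref{HamEq}--\eqref{HamBc} near a hyperbolic component, together with the strict inequality $\sigma(\mathcal{A}_k)<\sigma(\mathcal{M})$, forces $W|_{\mathcal{A}_k}>0$ for $k\ne k_0$), representation \eqref{MainWRepresent} collapses to $W(x) = d_H(x,\mathcal{M})$, the maximal solution vanishing on $\mathcal{M}$. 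Uniqueness of this limit then upgrades the subsequential convergence to full convergence.

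The main obstacle, as the authors flag, is the refined local analysis on boundary components: the naive blow-up at a point of $\mathcal{A}_k\subset\partial\Omega$ still produces a singularly perturbed operator (the curvature of $\partial\Omega$ and the Neumann condition do not scale out), so one cannot simply read off $\sigma(\mathcal{A}_k)$ from a limiting constant-coefficient problem. Instead one must analyze the structure of viscosity solutions of \eqref{HamEq}--\eqref{HamBc} in a tubular neighborhood of $\mathcal{A}_k$ — decomposing the dynamics into the fast tangential hyperbolic flow on $\partial\Omega$ and the transverse push-back from $b_\nu>0$ — and use this to design test functions solving \eqref{PrII} up to $o(\ve)$. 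Getting the constant right in \eqref{Fpoint1} and \eqref{Lcycle1}, i.e. verifying that only the tangential linearization contributes and that the reflection term enters at lower order, is the delicate point; the interior hyperbolic case \eqref{Fpoint}, \eqref{Lcycle} is comparatively standard (classical Wentzell–Freidlin theory, cf. \cite{Kif1_80, WF}).
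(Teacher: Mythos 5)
Your skeleton (subsequential limits of $W_\ve=-\ve\log u_\ve$, viscosity stability, the representation \eqref{MainWRepresent}, then matching bounds and a selection step) coincides with the paper's outline, but both load-bearing steps are defective as you state them. For the lower bound, the operator is not self-adjoint, so there is no variational principle into which an ``approximate eigenfunction supported near $\mathcal{A}_{k_0}$'' can be plugged: a small residual controls nothing for non-normal operators, and the positivity-type characterization of the principal eigenvalue would require a globally positive $\phi=e^{-V/\ve}$ with $-a_{ij}\partial^2_{ij}V+\frac1\ve H(\nabla V,x)+c\ \ge\ \sigma(\mathcal{A}_{k_0})-o(1)$ throughout $\Omega$; your glued function fails this near the \emph{other} Aubry components, where $H(\nabla d_H,\cdot)=0$ and $c$ may lie well below $\sigma(\mathcal{A}_{k_0})$. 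The idea you are missing is the paper's penalized auxiliary problem \eqref{AuxPr}: subtract $\rho(x)/\ve$ with $\rho\ge0$ vanishing near the maximizing component and positive on the rest of $\mathcal{A}_H$, prove $\overline\lambda_\ve\le\lambda_\ve$ by the Fredholm/positive-adjoint-eigenfunction argument, and observe that for the penalized problem the maximizing component becomes significant, so the upper-bound machinery applied to it gives $\overline\lambda_\ve\to\sigma(\mathcal{A})$, hence $\liminf\lambda_\ve\ge\max_k\sigma(\mathcal{A}_k)$.

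For the upper bound, the localization must be carried out at a \emph{significant} (order-minimal) component of the particular subsequential limit $W$, not ``near the component where the maximum is attained'': the maximizer need not satisfy $W=d_H(\cdot,\mathcal{A})+W(\mathcal{A})$ in its neighborhood, so neither extremum points of $W_\ve-W^{\pm}_{\delta,\ve}$ nor concentration of mass is available there, and your argument has no foothold. Moreover, the actual quantitative content of the proof is the construction, near each of the four types of components, of $C^2$ test functions pinching $W$ strictly (via maximal solutions of stationary or periodic Riccati equations plus Lyapunov corrections), equipped with $\ve$-corrections ($\ve\overline\Phi^{\pm}_\delta(t)$ along cycles, $\mp\ve^2 z_N$ at the boundary) so that they satisfy \eqref{PrII} up to the correct order and have a normal derivative of definite sign forcing the comparison points into $\Omega$; this — including the verification that for boundary components with $b_\nu>0$ only the tangential linearization contributes to \eqref{Fpoint1}, \eqref{Lcycle1} — is precisely what you defer with ``design test functions solving \eqref{PrII} up to $o(\ve)$,'' i.e.\ the bulk of the paper (its Sections 5--8), and is not supplied by your sketch. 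Finally, your selection step asserts $W(\mathcal{A}_k)>W(\mathcal{M})$ because ``$u_\ve$ is exponentially smaller near $\mathcal{A}_k$,'' which is the conclusion rather than an argument; the paper instead uses the identity $\lim\lambda_\ve=\sigma(\mathcal{M}')$ valid for \emph{every} significant component $\mathcal{M}'$ of the limit $W$, so that uniqueness of the maximizer in \eqref{Max} forces $\mathcal{M}$ to be the unique minimal, hence least, component, and \eqref{MainWRepresent} collapses to $W=d_H(\cdot,\mathcal{M})$ after normalization. Without establishing that identity, your selection step is unsupported.
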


\section{Passing to the limit by vanishing viscosity techniques}
\label{SECTIONvanvisclimit} In this section we pass to the limit, as
$\ve\to0$,
in  equation
(\ref{PrII}) and boundary
condition (\ref{Bound1}) to get problem
(\ref{HamEq})--(\ref{HamBc}). We use the standard technique based
on the maximum principle and the a priori uniform $W^{1,\infty}$
bound for $W_\ve$ obtained by Berstain's method
\cite{LU}, \cite{S}.

First, considering  (\ref{PrII}) at the maximum and minimum points of $W_\ve(x)$ we easily get

\begin{lem}
\label{Leigval}
The first eigenvalue $\lambda_\ve$ satisfies the estimates $\min c(x) \leq\lambda_\ve\leq\max c(x)$.
\end{lem}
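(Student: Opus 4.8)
The plan is to evaluate the logarithmically transformed equation \eqref{Pr1} (equivalently \eqref{PrII}) at a point where $W_\ve=-\ve\log u_\ve$ attains its global minimum over $\overline{\Omega}$ and at a point where it attains its global maximum, exploiting two elementary observations: at such an extremum $\nabla W_\ve$ vanishes, so the Hamiltonian term equals $H(0,\cdot)=a_{ij}\cdot 0-b_i\cdot 0=0$ and drops out; and the remaining second-order term $-a_{ij}\partial_{ij}W_\ve$ has a definite sign. Since $u_\ve>0$ in $\Omega$, the function $W_\ve$ is a classical $C^2$ solution of \eqref{Pr1}; first I would record that $u_\ve$ is in fact strictly positive up to the boundary — by the strong maximum principle together with Hopf's lemma, using the Neumann condition \eqref{Bound} to exclude a zero of $u_\ve$ on $\partial\Omega$ — so that $W_\ve\in C^2(\overline{\Omega})$ and attains its maximum and minimum on the compact set $\overline{\Omega}$.

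Next I would carry out the two one-point estimates. Let $x_0$ be a minimum point of $W_\ve$. If $x_0\in\Omega$ this is an interior minimum, so $\nabla W_\ve(x_0)=0$ and the Hessian $D^2W_\ve(x_0)$ is positive semidefinite. If $x_0\in\partial\Omega$, then the tangential derivatives of $W_\ve$ vanish at $x_0$ while $\partial W_\ve/\partial\nu(x_0)=0$ by \eqref{Bound1}, so again $\nabla W_\ve(x_0)=0$; moreover, comparing $W_\ve$ along rays and along boundary curves issuing from $x_0$ into $\overline{\Omega}$ gives $e_ie_j\partial_{ij}W_\ve(x_0)\ge 0$ for every admissible direction $e$, and since this quadratic form is even in $e$ it is in fact $\ge 0$ for all $e$, i.e. $D^2W_\ve(x_0)\ge 0$. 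As $(a_{ij}(x_0))$ is positive definite, $a_{ij}(x_0)\partial_{ij}W_\ve(x_0)=\mathrm{tr}\big(A(x_0)D^2W_\ve(x_0)\big)\ge 0$. Substituting $x=x_0$ into \eqref{PrII} and using $H(\nabla W_\ve(x_0),x_0)=0$ yields $\ve\lambda_\ve=\ve c(x_0)-\ve a_{ij}\partial_{ij}W_\ve(x_0)\le\ve c(x_0)\le\ve\max_{\overline{\Omega}}c$, hence $\lambda_\ve\le\max c$. Running the identical argument at a maximum point $x_1$ of $W_\ve$ — where now $D^2W_\ve(x_1)\le 0$, so $a_{ij}\partial_{ij}W_\ve(x_1)\le 0$ — gives $\lambda_\ve\ge c(x_1)\ge\min c$, completing the proof.

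The only genuine obstacle, and the step I would treat most carefully, is the possibility that the extrema of $W_\ve$ (equivalently of $u_\ve$) sit on $\partial\Omega$, since the Neumann condition does not a priori rule out boundary localization; this is dealt with exactly as above — \eqref{Bound1} forces $\nabla W_\ve=0$ there so the Hamiltonian term still vanishes, and one-sided comparison plus parity of the Hessian quadratic form restores its semidefiniteness. Everything else is a single substitution into \eqref{PrII}. As an alternative route avoiding the transformation, one may apply the classical maximum principle and Hopf's lemma directly to \eqref{Pr}: if $\lambda_\ve>\max c$ then $c-\lambda_\ve<0$, so $u_\ve$ (positive and hence attaining a positive maximum) cannot have that maximum in $\Omega$, while a boundary maximum would force $\partial u_\ve/\partial\nu>0$, contradicting \eqref{Bound}; thus $\lambda_\ve\le\max c$, and the bound $\lambda_\ve\ge\min c$ follows symmetrically by considering a minimum of $u_\ve$.
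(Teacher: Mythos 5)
Your proof is correct and follows essentially the same route as the paper, which simply evaluates \eqref{PrII} at the maximum and minimum points of $W_\ve$; your careful treatment of boundary extrema (the Neumann condition forcing $\nabla W_\ve=0$ there, plus the parity argument for the Hessian) and the alternative direct maximum-principle argument on $u_\ve$ are just fuller versions of the same idea.
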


Next we establish the $W^{1,\infty}$ bound for $W_\ve$ in

\begin{lem}
\label{Leigfun}
Let $u_\ve$ be normalized by $\max u_\ve=1$ ( i.e. $\min W_\ve=0$). Then $\| W_\ve\|_{W^{1,\infty}(\Omega)}\leq C$ with a constant
C independent of $\ve$.
\end{lem}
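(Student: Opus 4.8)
The plan is to establish the uniform Lipschitz bound for $W_\ve$ via the classical Bernstein method, treating the interior estimate and the boundary estimate separately, and then combining them.

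\medskip

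\noindent\textbf{Interior estimate.} First I would introduce $v = |\nabla W_\ve|^2$ and differentiate equation \eqref{PrII} with respect to $x_k$. Multiplying by $\partial W_\ve/\partial x_k$ and summing over $k$ yields a second order elliptic equation for $v$ of the form
\begin{equation}
\label{BernsteinIdentity}
-\ve a_{ij}\frac{\partial^2 v}{\partial x_i\partial x_j} + 2\ve a_{ij}\frac{\partial^2 W_\ve}{\partial x_i\partial x_k}\frac{\partial^2 W_\ve}{\partial x_j\partial x_k} + (\text{l.o.t.})\cdot\nabla v = F(x,\nabla W_\ve),
\end{equation}
where the lower order terms come from differentiating the coefficients and from $H_p$, and $F$ collects the terms produced by differentiating $a_{ij}$, $b_i$, $c$ and the $\ve\lambda_\ve$ term. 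The key point is that the second (Hessian-squared) term on the left is nonnegative by uniform ellipticity (\textbf{a3}), in fact bounded below by $c_0\ve|D^2W_\ve|^2$, while $F$ grows at most like $C(1+|\nabla W_\ve|^2) = C(1+v)$ since all coefficients are $C^2$ on $\overline\Omega$ (by \textbf{a2}) and $\lambda_\ve$ is bounded by Lemma~\ref{Leigval}; one must also use $|H_p(\nabla W_\ve,x)|\le C(1+|\nabla W_\ve|)$ to control the transport term in $v$. Evaluating \eqref{BernsteinIdentity} at an interior maximum point of $v$ (if it is interior) kills the Hessian-squared and the second-derivative-of-$v$ terms after using that at such a point $\nabla v = 0$ and $-a_{ij}\partial^2_{ij}v\ge 0$, leaving $0 \le F \le C(1+\max v)$, which does not immediately close. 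To close it one uses the standard trick of localizing with a cutoff: set $\varphi = \zeta^2 v$ with $\zeta$ a cutoff, or alternatively exploit the good term: the Hessian-squared term at a maximum gives $c_0\ve|D^2W_\ve|^2 \le C(1+v)$, and since $|\nabla v|=0$ at the max forces $a_{ij}\partial^2_{ik}W_\ve \,\partial_k W_\ve = 0$, one still needs the auxiliary function $e^{\mu W_\ve}|\nabla W_\ve|^2$ (Bernstein's exponential device) so that the good zeroth-order term $-\mu\ve a_{ij}\partial_i W_\ve\partial_j W_\ve\cdot v \sim -\mu c_0\ve v^2$ appears and dominates $C(1+v)$ for $\mu$ large, yielding $v\le C/\ve$ — but that is the wrong scaling. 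The correct observation is that \eqref{PrII} itself, read at the max of $v$, gives $H(\nabla W_\ve,x)\le \ve\lambda_\ve - \ve c + \ve a_{ij}\partial^2_{ij}W_\ve$, and combined with $H(p,x)\ge c_0|p|^2 - C|p|$ plus the Bernstein bound $\ve|D^2W_\ve|^2\le C(1+v)$ one gets $c_0 v - C\sqrt v \le C\sqrt{\ve}\sqrt{1+v} + C\ve$, which forces $v\le C$ independent of $\ve$. This is the heart of the argument and the step I expect to be the main obstacle — getting the $\ve$-powers to balance so the bound is $\ve$-uniform rather than blowing up.

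\medskip

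\noindent\textbf{Boundary estimate.} The maximum of $v=|\nabla W_\ve|^2$ may instead be attained on $\partial\Omega$, so one cannot simply work in the interior. Here I would use the Neumann condition \eqref{Bound1}: flattening the boundary locally (or using the signed distance function to $\partial\Omega$ as in \cite{LU}), one introduces the modified quantity $v = |\nabla W_\ve|^2 + \Lambda\, d(x)$ for a suitable constant $\Lambda$ and the distance $d$ to $\partial\Omega$, or uses a barrier/reflection argument. The condition $\partial W_\ve/\partial\nu=0$ lets one express the normal derivative of $v$ on $\partial\Omega$ in terms of tangential derivatives of $W_\ve$ and the second fundamental form of $\partial\Omega$ (which is bounded since $\partial\Omega\in C^2$ by \textbf{a1}), giving $\partial v/\partial\nu \le C(1+v)$ on $\partial\Omega$; then choosing $\Lambda$ large enough the auxiliary function $v+\Lambda d$ cannot attain its maximum on the boundary, reducing to the interior case. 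The $C^2$ regularity of $\partial\Omega$ is exactly what is needed for this reflection to introduce only bounded (not singular) curvature terms.

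\medskip

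\noindent\textbf{Conclusion and $L^\infty$ bound.} Once $\|\nabla W_\ve\|_{L^\infty(\Omega)}\le C$ is obtained, the $L^\infty$ bound on $W_\ve$ itself follows immediately from the normalization $\min W_\ve = 0$ together with the Lipschitz bound and the boundedness of the diameter of $\Omega$, i.e. $0\le W_\ve(x) = W_\ve(x) - \min W_\ve \le C\,\mathrm{diam}(\Omega)$. Combining the two gives $\|W_\ve\|_{W^{1,\infty}(\Omega)}\le C$ uniformly in $\ve$, as claimed. I would cite \cite{LU} and \cite{S} for the standard Bernstein computations and concentrate the written proof on checking that the structure of $H$ in \eqref{Hamilt} and the a priori bound on $\lambda_\ve$ from Lemma~\ref{Leigval} make all the error terms of the right order in $\ve$.
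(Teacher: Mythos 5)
Your overall plan — a Bernstein gradient estimate for $|\nabla W_\ve|^2$, reducing the boundary case to the interior via the Neumann condition, then closing the estimate by feeding the PDE \eqref{PrII} back in at the maximum point — is exactly the strategy the paper uses. But your boundary reduction contains a genuine circularity. You derive $\partial_\nu |\nabla W_\ve|^2 \le C(1+|\nabla W_\ve|^2)$ on $\partial\Omega$ and then propose adding $\Lambda\, d(x)$ to push the maximum inside. Since $\partial_\nu(|\nabla W_\ve|^2+\Lambda d)=\partial_\nu|\nabla W_\ve|^2-\Lambda\le C(1+|\nabla W_\ve|^2)-\Lambda$ on $\partial\Omega$, making this negative at a boundary maximum requires $\Lambda>C(1+\max|\nabla W_\ve|^2)$, i.e.\ $\Lambda$ must dominate the very quantity you are trying to bound. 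The paper's device (following Lions \cite{L}) is \emph{multiplicative} rather than additive: one chooses a positive weight $\phi\in C^2(\overline\Omega)$, e.g.\ $\phi=e^{-\mu d}$ with $\mu$ large, so that the \emph{linear-in-$v$} estimate $\partial_\nu|\nabla W_\ve|^2\le C|\nabla W_\ve|^2$ turns into the strict, $v$-independent inequality $\partial_\nu(\phi|\nabla W_\ve|^2)\le -\phi|\nabla W_\ve|^2$ on $\partial\Omega$, which is (\ref{Bcaux}). Then a boundary maximum of $\omega_\ve:=\phi|\nabla W_\ve|^2$ would have $\partial_\nu\omega_\ve\ge0$, contradicting (\ref{Bcaux}) unless $\omega_\ve\equiv0$; so either $\nabla W_\ve\equiv0$ or the maximum is interior. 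The whole Bernstein computation is then carried out on $\omega_\ve$ at a single interior maximum, with no separate interior/boundary cases.

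A second, smaller issue is the exponent in your Bernstein bound. You write $\ve|D^2W_\ve|^2\le C(1+v)$, but the term $H_x\cdot\nabla W_\ve$ appearing when you differentiate the PDE and multiply by $\partial_k W_\ve$ is cubic in $\nabla W_\ve$ (and, once the $\phi$-weight is in place, the $H_p\cdot\nabla v$ term contributes similarly since $\nabla v\neq0$ at the max of $\phi v$). The correct bound, which is the paper's (\ref{Fin1}), is $\ve|D^2W_\ve|^2\le C(\omega_\ve^{3/2}+1)$. Your closing computation survives this change: combining with the direct PDE estimate $\omega_\ve\le C(\ve\sum|D^2W_\ve|+1)$ (which is (\ref{Fin2}) — it follows exactly as you describe from the ellipticity $H(p,x)\ge c_0|p|^2-C|p|$ and the bound on $\lambda_\ve$ from Lemma~\ref{Leigval}) gives $\omega_\ve\le C\sqrt{\ve}\,\omega_\ve^{3/4}+C$ and hence an $\ve$-uniform bound. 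Your final step — the $L^\infty$ bound on $W_\ve$ from $\min W_\ve=0$ together with the Lipschitz bound and the diameter of $\Omega$ — is correct and matches the paper.
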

\begin{proof}
Following \cite{L} observe that the boundary condition
$\frac{\partial W_{\ve}}{\partial\nu}=0$ yields the pointwise
bound
$$
\frac{\partial}{\partial\nu} |\nabla W_\ve |^2 \leq C  |\nabla W_\ve |^2
\qquad \text{on} \ \partial\Omega.
$$
Therefore, for an appropriate positive function $\phi\in C^2(\overline{\Omega})$,
\begin{equation}
\label{Bcaux}
\frac{\partial}{\partial\nu} \biggl(\phi|\nabla W_\ve |^2\biggr) \leq -  \biggl(\phi |\nabla W_\ve |^2\biggr)
\qquad \text{on} \ \partial\Omega.
\end{equation}
Next we use Bernstain's method to obtain a uniform bound for
$\omega_\ve(x)=\phi(x) |\nabla W_\ve(x) |^2$,
following closely the line of \cite{EI}, Lemma 1.2. In view of (\ref{Bcaux}) either $|\nabla W_\ve | \equiv 0$ and we
have nothing  to prove,  or $\max w_\ve$ is attained at a point $\xi\in\Omega$. In the latter case we have
$\nabla \omega_\ve(\xi)=0$ and
$$
 a_{ij} \frac{\partial^2}{\partial x_i\partial x_j} \biggl(   \phi |\nabla W_\ve |^2  \biggr)\leq 0
\qquad \text{at} \ x=\xi.
$$
Expanding the left hand side of this inequality we get
\begin{equation}
\label{Au1}
2\ve  \phi a_{ij} \frac{\partial^2 W_\ve }{\partial x_i\partial x_k}\frac{\partial^2 W_\ve } {\partial x_j\partial x_k} \leq
-2\ve  \phi a_{ij} \frac{\partial^3 W_\ve }{\partial x_i\partial x_j\partial x_k}\frac{\partial W_\ve }{\partial x_k}
-4\ve  a_{ij} \frac{\partial\phi }{\partial x_j} \frac{\partial^2 W_\ve }{\partial x_i\partial x_k}\frac{\partial W_\ve }{\partial x_k}
-\ve  a_{ij} \frac{\partial^2\phi }{\partial x_i\partial x_j}
 |\nabla W_\ve |^2.
\end{equation}
Using (\ref{PrII}) we obtain
\begin{equation}
\label{Au2}
-\ve  \phi a_{ij} \frac{\partial^3 W_\ve } {\partial x_i\partial x_j\partial x_k}\frac{\partial W_\ve } {\partial x_k} \leq
\ve \phi \frac{\partial a_{ij} }{\partial x_k} \frac{\partial^2 W_\ve }{\partial x_i\partial x_j}\frac{\partial W_\ve }{\partial x_k}+
C \biggl( \omega_\ve^{3/2}+\omega_\ve+\omega_\ve^{1/2}+1\biggr) ,
\qquad \text{at} \ x=\xi,
\end{equation}
where we have also exploited the fact that $\nabla \omega_\ve(\xi)=0$. Substitute now (\ref{Au2}) into (\ref{Au1}) to derive
\begin{equation}
\label{Fin1}
\ve  \frac{\partial^2 W_\ve } {\partial x_i\partial x_k} \frac{\partial^2 W_\ve } {\partial x_i\partial x_k} \leq
C \biggl( \omega_\ve^{3/2}+1\biggr) ,
\qquad \text{at} \ x=\xi.
\end{equation}
On the other hand it follows from (\ref{PrII}) that
\begin{equation}
\label{Fin2}
\omega_\ve \leq
C \Biggl( \ve\sum \Biggl| \frac{\partial^2 W_\ve }{\partial x_i\partial x_j}\Biggr| +1\Biggr) .
\end{equation}
Combining (\ref{Fin1}) and  (\ref{Fin2}) we obtain  $\omega_\ve \leq  C$ and the required uniform bound follows.
\end{proof}

With a priori bounds from Lemma \ref{Leigval} and Lemma \ref{Leigfun} it is quite standard to  pass to the limit in (\ref{PrII}). Indeed,
up to extracting a subsequence, $W_\ve\to W$ uniformly in $\overline{\Omega}$ and     $\lambda_\ve\to \lambda$. Consider
a test function $\Phi\in C^\infty(\overline{\Omega})$ and assume that $W-\Phi$ attains strict maximum at a point $\xi$. Then
$W_\ve-\Phi$ attains local maximum at $\xi_\ve$ such that  $\xi_\ve \to \xi$ as $\ve\to 0$. If $\xi_\ve\in\Omega$ then
$\nabla W_\ve(\xi_\ve)=\nabla \Phi(\xi_\ve)$ and
$$
 a_{ij} \frac{\partial^2 W_\ve}{\partial x_i\partial x_j} (\xi_\ve)\leq a_{ij} \frac{\partial^2 \Phi}{\partial x_i\partial x_j} (\xi_\ve)
\ \ \text{if}\ \xi_\ve\in\Omega
$$
and $\frac{\partial \Phi}{\partial \nu} (\xi_\ve)\leq 0$ if $\xi_\ve\in\partial\Omega$.
Passing to the limit as $\ve\to 0$ and using (\ref{PrII}) and Lemma \ref{Leigval} we get
$$
H ( \nabla\Phi(\xi),\xi)\leq 0\ \ \text{if} \ \xi\in\Omega,  \ \  \ \
\text{and}\ \min
\Biggl\{ H ( \nabla\Phi(\xi),\xi), \frac{\partial\Phi}{\partial\nu}(\xi)\Biggr\}\leq 0\ \  \hbox{if }\xi\in\partial \Omega.
$$
Arguing similarly in the case when $\xi$ is a strict minimum of $W-\Phi$ we conclude that $W$ is a
viscosity solution
of (\ref{HamEq})-(\ref{HamBc}).

\section{Matching  lower and upper bounds for eigenvalues and
selection of the solution of  (\ref{HamEq})--(\ref{HamBc})}

Due to the results of Section \ref{SECTIONvanvisclimit} we can
assume, passing to a subsequence if necessary, that  eigenvalues
$\lambda_\ve$ converge to a finite limit $\lambda$ and
functions $W_\ve$ converge uniformly in $\overline \Omega$ to a
solution $W$ of problem (\ref{HamEq})--(\ref{HamBc}) as $\ve\to 0$.
In the following four steps we prove that $\lambda$ and $W(x)$ are
described by Theorem \ref{Mainth}.

\smallskip
%
\noindent {\it Step I: Significant component(s) of ${\mathcal A}_H$.} Recall
the definition of the partial order relation $\preceq$ on
$\mathcal{A}_H$ introduced in \cite{PR} as follows
\begin{equation}
\label{Order}
 \mathcal{A}^\prime\preceq  \mathcal{A}^{\prime\prime} \ \Longleftrightarrow \ W( \mathcal{A''})=d_H( \mathcal{A''}, \mathcal{A'})+W( \mathcal{A'}).
\end{equation}
Since the distance function $d_H(x,y)$ satisfies the triangle inequality and
$d_H( \mathcal{A''}, \mathcal{A'})+d_H( \mathcal{A'}, \mathcal{A''})>0$ for different components $\mathcal{A'}, \mathcal{A''}$
 of the Aubry set $ \mathcal{A}_H$, (\ref{Order}) indeed defines the partial order relation.

Condition (\ref{H1}) assumes that there are finitely many different components of the Aubry set.
It follows that there exists at least
one minimal component  $ \mathcal{M}:=\mathcal{A}_{k_0}$ (such that, $\forall \mathcal{A}_k\not=\mathcal{M}$,
 either $ \mathcal{M} \preceq \mathcal{A}_k$ or $ \mathcal{M}$ and $\mathcal{A}_k$ are not comparable).

Now show that
\begin{equation}
\label{Sign} W(x)=d_H(x,  \mathcal{M})+W( \mathcal{M}) \ \text{in}\ U\cap\overline{\Omega},\
\text{where}\ U\ \text{is a neighborhood of} \ \mathcal{M}.
\end{equation}
Indeed, otherwise there is a sequence $x_i\to \mathcal{M} $ and a component $\mathcal{A}_k\not=\mathcal{M}$
such that $W(x_i)=d_H(x_i,\mathcal{A}_k)+W(\mathcal{A}_k)$. Then taking the limit we derive
$W(\mathcal{M})=d_H(\mathcal{M},\mathcal{A}_k)+W(\mathcal{A}_k)$,
that is $ \mathcal{A}_k\preceq  \mathcal{M}$ which contradicts
the minimality of $\mathcal{M}$.

In what follows a component $\mathcal{M}$ such that (\ref{Sign})
is satisfied will be called a significant component. We
have shown that under condition (\ref{H1}) there is at least one significant
component in the Aubry set $\mathcal{A}_H$.

\smallskip

\noindent {\it Step II: Upper bound for eigenvalues.} The crucial
technical result in the proof of Theorem \ref{Mainth} is the
following Lemma whose proof is presented in subsequent four
Sections dealing separately with four possible cases of the structure of $\mathcal{M}$.

\begin{lem}
\label{MainTechLem} Let $\mathcal{M}$ be a significant component
of the Aubry set $\mathcal{A}_H$ satisfying either (\ref{H2}) or
(\ref{H3}). Then for sufficiently small $\delta>0$ there are continuous
functions $W^{\pm}_\delta(x)$, $W^{\pm}_{\delta,\ve}(x)$ and
neighborhoods $U_\delta$  of $\mathcal{M}$ such that
\begin{equation}
\label{StricMinMax}
W^{\pm}_\delta(\mathcal{M})=0 \quad
\text{and}\quad
W_\delta^{-}(x)<W(x)-W(\mathcal{M})<W_\delta^{+}(x)
\quad\text{in}\quad U_\delta\cap\overline\Omega\setminus
\mathcal{M},
\end{equation}
$W^{\pm}_{\delta,\ve}\in C^2(U_\delta\cap\overline \Omega)$,
$W^{\pm}_{\delta,\ve}\to W^{\pm}_\delta$ uniformly in
$U_\delta\cap\overline \Omega$ as $\ve\to 0$, and
\begin{equation}
 \liminf_{\delta\to 0}\liminf_{\ve\to 0, \, \xi_\ve\to
 \mathcal{M}}
\Bigl(-a_{ij}(\xi_\ve)\frac{\partial^2
W^{+}_{\delta,\ve}}{\partial x_i\partial x_j}(\xi_\ve)  +
\frac{1}{\ve}H(\nabla
W^{+}_{\delta,\ve}(\xi_\ve),\xi_\ve)+c(\xi_\ve)\Bigr)\geq
\sigma(\mathcal{M}). \label{forLowerBound}
\end{equation}
\begin{equation}
\limsup_{\delta\to 0}\limsup_{\ve\to 0, \, \xi_\ve\to
 \mathcal{M}}
\Bigl(-a_{ij}(\xi_\ve)\frac{\partial^2
W^{-}_{\delta,\ve}}{\partial x_i\partial x_j}(\xi_\ve)  +
\frac{1}{\ve}H(\nabla
W^{-}_{\delta,\ve}(\xi_\ve),\xi_\ve)+c(\xi_\ve)\Bigr)\leq
\sigma(\mathcal{M}). \label{forUpperBound}
\end{equation}
Moreover, if $U_\delta\cap\partial\Omega\not=\emptyset$ then the
functions $W^{\pm}_{\delta,\ve}$ also satisfy
$\frac{\partial W^{+}_{\delta,\ve}}{\partial \nu}>0$ on
$U_\delta\cap\partial\Omega$, and $\frac{\partial
W^{-}_{\delta,\ve}}{\partial\nu}<0$ on
$U_\delta\cap\partial\Omega$.
\end{lem}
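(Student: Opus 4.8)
The plan is to construct the barrier functions $W^{\pm}_\delta$ by perturbing the local representation $W(x)=d_H(x,\mathcal{M})+W(\mathcal{M})$ (valid near $\mathcal{M}$ by significance, Step I) and then to regularise these barriers to $C^2$ functions $W^{\pm}_{\delta,\ve}$ whose coefficient against the full perturbed operator $-a_{ij}\partial^2_{ij}\cdot+\frac1\ve H(\nabla\cdot,x)+c$ is controlled, near $\mathcal{M}$, by $\sigma(\mathcal{M})$. The first step is to pass to convenient local coordinates depending on which of the four cases of $\mathcal{M}$ occurs: if $\mathcal{M}=\{\xi\}\subset\Omega$, straighten the stable/unstable splitting of $\dot z=B(\xi)z$; if $\mathcal{M}$ is a cycle in $\Omega$, use Floquet coordinates transverse to the cycle in which the Poincar\'e return map is linearised; if $\mathcal{M}\subset\partial\Omega$, first flatten the boundary and use that $b_\nu>0$ there to solve the transverse direction explicitly, reducing the tangential dynamics to the interior cases. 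In each case, the local solution $d_H(x,\mathcal{M})$ of the eikonal equation $H(\nabla W,x)=0$ can be written, to leading order, as a quadratic form $\tfrac12\langle Q(x) (x-\mathcal{M}),(x-\mathcal{M})\rangle$ in the directions transverse to $\mathcal{M}$, where $Q$ solves the associated stationary Riccati/Hamilton equation along $\mathcal{M}$; the expansion then identifies $-a_{ij}\partial^2_{ij}$ applied to this quadratic with $\operatorname{tr}$ of a matrix whose relevant eigenvalue sum is exactly $\sum_{\theta_i>0}\theta_i$ (resp. $\tfrac1P\sum_{\Theta_i>1}\log\Theta_i$), i.e. $\sigma(\mathcal{M})-c(\mathcal{M})$ up to the $c$ term.

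Next I would use this structure to build $W^{\pm}_\delta$ as $d_H(\cdot,\mathcal{M})$ plus a small $\delta$-dependent correction chosen so that $W^{-}_\delta<W-W(\mathcal{M})<W^{+}_\delta$ strictly off $\mathcal{M}$ inside a neighborhood $U_\delta$, with $W^{\pm}_\delta(\mathcal{M})=0$: concretely one can take $W^{\pm}_\delta=(1\pm\delta)\,\widetilde d_H(\cdot,\mathcal{M})$ or add $\pm\delta\,\mathrm{dist}(\cdot,\mathcal{M})^2$ to the leading quadratic, exploiting that $W-W(\mathcal{M})-d_H(\cdot,\mathcal{M})=o(\mathrm{dist}^2)$. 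The regularisation to $W^{\pm}_{\delta,\ve}$ is obtained either by mollifying $d_H$ at scale $o(1)$ and adding a vanishing-as-$\ve\to0$ lower-order corrector that absorbs the $\ve a_{ij}\partial^2_{ij}$ viscosity term (a standard supersolution/subsolution trick for $H(\nabla W,x)=0$: one solves a transport equation along the Hamiltonian flow for the next term in an $\ve$-expansion), arranging $W^{\pm}_{\delta,\ve}\to W^{\pm}_\delta$ uniformly. Evaluating the full operator on $W^{+}_{\delta,\ve}$ at points $\xi_\ve\to\mathcal{M}$: the $\frac1\ve H$ term stays bounded (indeed $\to0$) because $W^{+}_{\delta,\ve}$ is an approximate solution of the eikonal equation up to $O(\ve)$; the $-a_{ij}\partial^2_{ij}$ term converges to the trace computed above, which after the $(1+\delta)$ scaling and the $\liminf_{\delta\to0}$ reproduces $-\sum_{\theta_i>0}\theta_i+c(\mathcal{M})=\sigma(\mathcal{M})$, giving \eqref{forLowerBound}; the sign-reversed construction gives \eqref{forUpperBound}. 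The boundary inequalities $\partial_\nu W^{\pm}_{\delta,\ve}\gtrless0$ follow from the explicit transverse solution in case $\mathcal{M}\subset\partial\Omega$, where $b_\nu>0$ forces $d_H$ to grow linearly in the inward normal direction with a definite sign, and one perturbs by $\mp\delta\nu$-linear terms to make the strict inequality.

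The main obstacle is the simultaneous control of the two competing scales: the $\ve$-regularisation must be fine enough that $\frac1\ve H(\nabla W^{\pm}_{\delta,\ve},x)$ does not blow up (so the mollification scale and the size of the $\ve$-corrector must be tuned to the modulus of the eikonal equation near $\mathcal{M}$, where $\nabla d_H$ degenerates like $\mathrm{dist}(x,\mathcal{M})$), while the $\delta$-perturbation must remain large enough to keep the strict ordering \eqref{StricMinMax}. In the limit-cycle cases this is genuinely delicate because the natural transverse coordinate is only defined modulo the period and the Riccati matrix $Q$ along the cycle may be only periodic, not constant, so one must verify that the relevant averaged trace is period-independent and equals $\tfrac1P\sum_{\Theta_i>1}\log\Theta_i$; in the boundary cases the difficulty the authors flag—that naive rescaling still yields a singular problem—is precisely why I would avoid blow-up and instead push the barrier construction directly, handling the normal direction by the explicit first integral afforded by $b_\nu>0$. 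The remaining four subsections then carry out this scheme case by case.
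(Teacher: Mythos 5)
Your high-level scaffolding matches the paper's: adapted local coordinates for each of the four cases, a quadratic ansatz $W\approx \Gamma z\cdot z$ near $\mathcal{M}$ with $\Gamma$ the maximal solution of a Riccati matrix equation (stationary, periodic, or their boundary-restricted analogues), a $\pm\delta$ perturbation of $\Gamma$ to produce a strict wedge, and recognition that $-2\operatorname{tr}(Q\Gamma)+c$ gives the $\sigma(\mathcal{M})$ formula via the eigenvalue sums. The ``transport equation for the next term in the $\ve$-expansion'' for the cycle case is also the right instinct (it is the ODE (\ref{phimin}) in the paper). But there are two substantive gaps in the proposal.

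First, and most importantly, your mechanism for the $\frac1\ve H(\nabla W^{\pm}_{\delta,\ve},\xi_\ve)$ term is incorrect. You claim it ``stays bounded (indeed $\to0$)'' because $W^{\pm}_{\delta,\ve}$ solves the eikonal equation up to $O(\ve)$. This cannot be arranged simultaneously with the strict wedge \eqref{StricMinMax}: the wedge forces $H(\nabla W^{+}_\delta,x)\gtrsim\delta\,\mathrm{dist}(x,\mathcal{M})^2>0$ off $\mathcal{M}$, and the sequence $\xi_\ve\to\mathcal{M}$ is arbitrary (in particular $\mathrm{dist}(\xi_\ve,\mathcal{M})$ need not be $O(\sqrt\ve)$), so $\frac1\ve H$ can and does blow up. The paper's argument is a \emph{sign} argument, not a smallness argument: $W^-_\delta$ is built as a \emph{strict subsolution} (so that $H(\nabla W^-_\delta,\cdot)\leq -\frac\delta2\,\mathrm{dist}^2\leq 0$ and the term $\frac1\ve H\leq 0$ only helps in \eqref{forUpperBound}), and symmetrically $W^+_\delta$ is a strict supersolution so $\frac1\ve H\geq 0$ in \eqref{forLowerBound}. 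The magnitude is irrelevant.

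Second, the specific $\delta$-perturbations you propose — $(1\pm\delta)d_H$ or $\pm\delta\,\mathrm{dist}^2$ — do not in general produce the strict Riccati sub/supersolution inequalities that the sign argument rests on: perturbing $\Gamma$ by $\pm\delta I$ changes the Riccati residue by $\mp\delta\bigl(4\Gamma Q+4Q\Gamma-B-B^*\bigr)+O(\delta^2)$, and the symmetric part of $4Q\Gamma-B$ need not be definite even though all its eigenvalues have positive real parts. The paper instead takes $\Gamma^{\pm}_\delta=\Gamma\pm\delta D$ with $D$ the positive definite solution of the Lyapunov equation $D(4\Gamma Q-B)+(4\Gamma Q-B)^*D=2I$; this particular $D$ is what converts anti-stability of $4Q\Gamma-B$ into the required definite sign of $H(\nabla W^{\pm}_\delta,\cdot)$. (Also, the claim $W-W(\mathcal{M})-d_H(\cdot,\mathcal{M})=o(\mathrm{dist}^2)$ is not needed — these agree exactly near $\mathcal{M}$ by significance; what is $o(\mathrm{dist}^2)$ is the deviation of $d_H$ from its leading quadratic, which requires the explicit near-optimal test curve argument of Lemmas \ref{Lsupers} and \ref{Lemupperb}.) Finally, the mollification step you envisage is unnecessary: for interior fixed points $W^{\pm}_{\delta,\ve}\equiv W^{\pm}_\delta$ with no $\ve$-dependence at all, and in the cycle and boundary cases the $\ve$-corrections are an explicit periodic ODE along the cycle and the term $\mp\ve^2 z_N$ enforcing the strict normal-derivative sign, respectively.
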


Now, assuming that we know a minimal component $\mathcal{M}$ of
the Aubry set $\mathcal{A}_H$, we can identify the limit $\lambda$
of eigenvalues $\lambda_\ve$. Consider the difference
$W_\ve-W^{-}_{\delta,\ve}$, where $W^{-}_{\delta,\ve}$ are test
functions described in Lemma \ref{MainTechLem}. By
(\ref{StricMinMax}) the function $W-W^{-}_{\delta}-W(\mathcal{M})$ vanishes on
 $\mathcal{M}$ while it is strictly positive in a punctured neighborhood of $\mathcal{M}$.
 Then, since
$W_\ve-W^{-}_{\delta,\ve}$ converge uniformly to
$W-W^{-}_{\delta}$ as $\ve\to 0$ in a neighborhood of
$\mathcal{M}$, there exists a sequence of local minima $\xi_\ve$
of $W_\ve-W^{-}_{\delta,\ve}$ such that $\xi_\ve\to\mathcal{M}$.
Moreover, if $\mathcal{M}\cap \partial \Omega\not=\emptyset$ then
$\frac{\partial W^{-}_{\delta,\ve}}{\partial\nu}<\frac{\partial
W_{\ve}}{\partial\nu}=0$ on $\partial\Omega$ (locally near
$\mathcal{M}$) and therefore $\xi_\ve\in\Omega$ for sufficiently
small $\ve$. For such $\ve$ we have
$$
\nabla W_\ve= \nabla W_{\delta,\ve}^- \quad and\quad
-a_{ij}\frac{\partial^2 W_\ve}{\partial x_i \partial x_j}\leq
-a_{ij}\frac{\partial^2 W_{\delta,\ve}^-}{\partial x_i \partial x_j}
 \ \ at\ \  x=\xi_\ve.
$$
Therefore,
$$
\begin{aligned}
\lambda_\ve &=- a_{ij}(\xi_\ve)\frac{\partial^2 W_\ve}{\partial
x_i \partial x_j}(\xi_\ve)
+\frac{1}{\ve}H(\nabla W_\ve(\xi_\ve), \xi_\ve)+c(\xi_\ve)\\
&\leq-a_{ij}(\xi_\ve)\frac{\partial^2 W^{-}_{\delta,\ve}}{\partial
x_i
\partial x_j}(\xi_\ve) +\frac{1}{\ve}H(\nabla W^{-}_{\delta,\ve}(\xi_\ve), \xi_\ve)
+c(\xi_\ve).
\end{aligned}
$$
Thus we can use (\ref{forUpperBound}) here to pass first to the
$\limsup$ as $\ve\to 0$ and then as $\delta\to 0$, this yields
$\limsup_{\ve\to 0} \lambda_\ve \leq \sigma(\mathcal{M})$.
Similarly one obtains the matching upper bound so that
\begin{equation}
\label{Lsup1} \lim_{\ve\to 0} \lambda_\ve = \sigma(\mathcal{M}).
\end{equation}
However, since at this point $\mathcal{M}$ is unknown (it depends
on $W$ and thus on the particular choice of a subsequence made in
the beginning of the Section) equality (\ref{Lsup1}) guarantees
only the upper bound
\begin{equation}
\label{Lsup2} \limsup_{\ve\to 0} \lambda_\ve \leq
\max\Bigl\{\sigma( \mathcal{A}_k);  \mathcal{A}_k\subset
\mathcal{A}_H\Bigr\},
\end{equation}
where the $\limsup_{\ve\to 0}$ is taken over the whole family $\{\lambda_\ve,\,
\ve>0\}$.

\smallskip

\noindent {\it Step III: Lower bound for eigenvalues.}
Consider a component $\mathcal A$ of the Aubry set $\mathcal{A}_H$
such that $\sigma(\mathcal{A})=\max\{\sigma(\mathcal{A}_k);\, \mathcal{A}_k\subset \mathcal{A}_H\}$.
Introduce a smooth
function $\rho(x)$ such that
$$
\rho(x)\geq 0\ \text{in}\ \overline{\Omega},\ \ \rho(x)=0\
\text{in a neighborhood of } \ \mathcal{A}, \ \text{and}\
\rho(x)>0, \ \text{when} \ x\in \mathcal{A}_H\setminus \mathcal{A}
$$
 and consider the first eigenvalue
$\overline \lambda_\ve$ of an auxiliary eigenvalue problem
\begin{equation}
\label{AuxPr}
\ve a_{ij} (x)\frac{\partial^2 \overline{u}_\ve}
{\partial x_i\partial x_j} +b_{i} (x)\frac{\partial
\overline{u}_\ve}{\partial x_i}
+\Bigl(c(x)-\frac{1}{\ve}\rho(x)\Bigr)
\overline{u}_\ve=\overline{\lambda}_\ve \overline{u}_\ve \
\text{in}\ \Omega,
\end{equation}
with the Neumann condition $\frac{\partial
\overline{u}_\ve}{\partial \nu}=0$ on $\partial \Omega$. By the
Krein-Rutman theorem the eigenvalue $\overline{\lambda}_\ve$ is
real and of multiplicity one, and $\overline{u}_\ve$ being
normalized by $\max_{\Omega}\overline{u}_\ve=1$ satisfies
$\overline{u}_\ve>0$ in $\Omega$. Note that the adjoint problem also has a
sign preserving eigenfunction.
Then it follows that
\begin{equation}
\overline{\lambda}_\ve\leq \lambda_\ve.
\label{lowereigenvaluebounnd}
\end{equation}
Indeed, otherwise we have
\begin{equation}
\ve a_{ij} (x)\frac{\partial^2 {u}_\ve} {\partial x_i\partial x_j}
+b_{i} (x)\frac{\partial {u}_\ve}{\partial x_i}
+\Bigl(c(x)-\frac{1}{\ve}\rho(x)\Bigr)
{u}_\ve-\overline{\lambda}_\ve
{u}_\ve=-\Bigl(\overline{\lambda}_\ve-\lambda_\ve+\frac{1}{\ve}\rho(x)\Bigr)
{u}_\ve<0 \quad \text{in}\ \Omega. \label{AuxOperInequal}
\end{equation}
On the other hand, by Fredholm's theorem the right hand side in
(\ref{AuxOperInequal}) must be orthogonal (in $L^2(\Omega)$) to
any eigenfunction of the problem adjoint to (\ref{AuxPr}). Since
the latter problem has a sign preserving eigenfunction we arrive at
a contradiction which proves (\ref{lowereigenvaluebounnd}).

Let $\overline{W}_\ve:=-\ve\log \overline{u}_\ve$ be the scaled
logarithmic transform of $\overline{u}_\ve$, i.e.
$\overline{u}_\ve=e^{-\overline{W}_\ve/\ve}$. Following the line
of Section \ref{SECTIONvanvisclimit} one can show that, up to
extracting a subsequence, functions $\overline{W}_\ve$ converge
(uniformly in $\overline{\Omega}$) to a viscosity solution $\overline{W}$of
the problem
\begin{equation}
\label{modifiedHanJac}
H(\nabla\overline{W}(x),x)-\rho(x)=\Lambda \
\text{in}\ \Omega
\end{equation}
with the boundary condition $\frac{\partial \overline{W}}{\partial
\nu} =0$, where $\Lambda=\lim_{\ve\to 0} \ve\overline{\lambda}_\ve$. Note
that the argument in Lemma~\ref{Leigval} yields now bounds of the
form $-\frac{C}{\ve}\leq \overline{\lambda}_\ve\leq C$ with some
$C>0$ independent of $\ve$.  Nevertheless  these bounds are sufficient
to derive problem (\ref{modifiedHanJac}) with the Neumann boundary condition.
 Moreover, since
$\rho=0$ in a neighborhood of $\mathcal{A}$ one can show that
$\Lambda=0$ using testing curves $\eta$ from (\ref{AubryNeumann})
in the variational representation for the additive eigenvalue
$\Lambda$ (see \cite{I}),
$$
\Lambda=-\lim_{T\to\infty}\inf\Bigl\{\frac{1}{T}\int_0^T
\bigl(L(-v,\eta)+\rho(\eta)\bigr)\,{\rm d} t;\, \eta\ \text{solves
(\ref{Skor}) with} \ \eta(0)=x\in\overline{\Omega}\Bigr\}.
$$
This implies, in particular,  that
$$
\overline{W}(x)=d_{H}(x,\mathcal{A}) \ \text{in a neighborhood
of}\ \mathcal{A},
$$
where $d_{H}(x,y)$ is the distance function given by
(\ref{dfunction}). Then arguing as in
second step
we obtain
$$
\overline{\lambda}_\ve\to \sigma(\mathcal{A}).
$$
Thanks to (\ref{lowereigenvaluebounnd}) this yields the lower
bound $ \liminf \lambda_\ve\geq \max
\bigl\{\sigma(\mathcal{A}_k);\,
\mathcal{A}_k\in\mathcal{A}_H\bigr\}$ complementary
to  (\ref{Lsup2}). Thus formula (\ref{Max}) is proved.

\smallskip

\noindent {\it Step IV: Selection of the solution of  (\ref{HamEq})-(\ref{HamBc}).}
Let us assume now that the maximum in  (\ref{Max}) is attained at exactly one
component $\mathcal{M}$. Then comparing (\ref{Max}) with (\ref{Lsup1}) we see
that $\mathcal{M}$ is the unique significant component in $\mathcal{A}_H$, therefore it
is the only minimal component of $\mathcal{A}_H$ with respect to the order relation $\prec$.
Thus   $\mathcal{M}$ is the least component  of $\mathcal{A}_H$. It follows
that $W(\mathcal{A}_k)-W(\mathcal{M})=d_H(\mathcal{A}_k,\mathcal{M})$
for every $\mathcal{A}_k\subset \mathcal{A}_H$. Then by (\ref{MainWRepresent})
the representation  $W(x)=d_H(x,\mathcal{M})+W(\mathcal{M})$ holds
in $\overline{\Omega}$. Finally, since
$\min_{\overline{\Omega}} W(x)=\lim_{\ve\to 0} \min_{\overline{\Omega}} W_\ve(x)=0$
we have $W(\mathcal{M})=0$, i.e. $W(x)=d_H(x,\mathcal{M})$.

\smallskip

\noindent
Theorem \ref{Mainth} is proved. \hfill $\square$

\section{Construction of test functions: case of fixed points in $\Omega$}
\label{FixP}
The central part in the proof of Theorem \ref{Mainth} is the construction of test functions satisfying
the conditions of Lemma \ref{MainTechLem} for different types of components of the Aubry set $\mathcal{A}_H$.
Consider first the case when a fixed point $\xi\in\Omega$ of the ODE $\dot x=b(x)$ is a significant component
of
$\mathcal{A}_H$.
We can assume that $W(\xi)=0$, subtracting an appropriate constant if necessary.
Then $W(x)$ is given by
\begin{equation}
\label{Ident} W(x)=d_H(x,\xi) \ \text{in a neighborhood} \ U(\xi)
\ {\rm of}\ \xi.
\end{equation}
We begin by studying the local behavior of $W(x)$ near $\xi$.
Consider for sufficiently small $z$ the ansatz
\begin{equation}
\label{Repr} W(z+\xi)=\Gamma_{ij} z_i z_j+o(|z|^2)
\end{equation}
with a symmetric $N\times N$ matrix $\Gamma$. After substituting (\ref{Repr}) into (\ref{HamEq})
we are led to the
Riccati matrix equation
\begin{equation}
\label{Ric}
4\Gamma Q\Gamma-\Gamma B-B^*\Gamma =0,
\end{equation}
where $Q=\Bigl(a_{ij}(\xi)\Bigr)_{i,j=\overline{1,N}}$,
$B=\Bigl(\frac{\partial b_i}{\partial
x_j}(\xi)\Bigr)_{i,j=\overline{1,N}}$.

Next we show that
(\ref{Repr}) holds with $\Gamma$ being the maximal symmetric
solution of (\ref{Ric}); for existence of such a solution see, e.g., \cite{LR}  or  \cite{AFIJ} .
To this end
consider the
solution $D$ of the Lyapunov matrix equation
\begin{equation}
\label{Lyap}
D(4\Gamma Q-B)+(4 \Gamma Q-B)^*D =2I
\end{equation}
given by
\begin{equation}
\label{LyapFormulaIntegral}
D=2\int_{-\infty}^0   e^{(4\Gamma Q-B)^*t} e^{(4Q\Gamma -B)t}{\rm d}t.
\end{equation}
By Theorem 9.1.3 in  \cite{LR} all the eigenvalues of the matrix
$4Q\Gamma-B$ have positive real parts, so that the integral in (\ref{LyapFormulaIntegral}) does converge.
Set
$$
\Gamma_\delta^{\pm}=\Gamma\pm\delta D.
$$
 Then $\Gamma_\delta^-$
satisfies
\begin{equation}
\label{Matsubs}
4\Gamma_\delta^-Q\Gamma_\delta^--\Gamma_\delta^-B-B^*\Gamma_\delta^-\leq -\delta I
\end{equation}
for sufficiently small $\delta>0$.

Introduce the quadratic function  $W_\delta^-(x):= \Gamma_\delta^-(x-\xi)\cdot
(x-\xi)$. Thanks to (\ref{Matsubs}) this function
satisfies
\begin{equation}
\label{HJacSubsol}
H(\nabla W_\delta^-(x),x)\leq -\frac{\delta}{2}|x-\xi|^2 \quad {\rm  in  \ a \ neighborhood\  of\ } \xi.
\end{equation}
This yields the following result whose proof is identical to the
proof of Lemma 16 in \cite{PR}  (see also the arguments in the proof of Lemma \ref{Lcompar} below).

\begin{lem}
\label{Lsubs} The strict pointwise inequality  $W_\delta^-(x)<
W(x)$ holds in a punctured neighborhood of $\xi$ for sufficiently
small $\delta>0$.
\end{lem}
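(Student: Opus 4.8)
The plan is to establish the strict inequality $W_\delta^-(x)<W(x)$ near $\xi$ by exploiting the fact that $W_\delta^-$ is a strict viscosity subsolution of the Hamilton--Jacobi equation near $\xi$ (by \eqref{HJacSubsol}) while $W$ is a supersolution, together with the representation $W(x)=d_H(x,\xi)$ from \eqref{Ident}. The starting observation is that both functions vanish at $\xi$, and $W_\delta^-(x)=\Gamma_\delta^-(x-\xi)\cdot(x-\xi)=W(x)-\delta D(x-\xi)\cdot(x-\xi)+o(|x-\xi|^2)$ once we know \eqref{Repr} holds with the maximal solution $\Gamma$; since $D$ is positive definite, $W_\delta^-(x)<W(x)$ at least infinitesimally near $\xi$. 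To upgrade this to a genuine punctured neighborhood one should argue by contradiction: suppose there is a sequence $x_n\to\xi$, $x_n\neq\xi$, with $W_\delta^-(x_n)\geq W(x_n)$.

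First I would use the control-theoretic (dynamic programming) characterization of $W(x)=d_H(x,\xi)$: for any admissible trajectory $\eta$ of the Skorohod problem \eqref{Skor} joining $x$ to $\xi$ one has $W(x)\leq\int_0^tL(-v(s),\eta(s))\,{\rm d}s$, and conversely $d_H$ satisfies the dynamic programming principle $W(x)=\inf\{\int_0^\tau L(-v,\eta)\,{\rm d}s+W(\eta(\tau))\}$. On the other hand, the differential inequality \eqref{HJacSubsol}, which reads $H(\nabla W_\delta^-,x)\leq-\tfrac\delta2|x-\xi|^2$, translates via the Legendre duality $L(v,x)=\sup_p(v\cdot p-H(p,x))$ into the statement that along any $C^1$ curve $\eta$,
\[
\frac{d}{ds}W_\delta^-(\eta(s))=\nabla W_\delta^-(\eta(s))\cdot\dot\eta(s)\geq -L(-\dot\eta(s),\eta(s))+\frac\delta2|\eta(s)-\xi|^2 ,
\]
so that $W_\delta^-(x)$ bounds from below the cost of any trajectory reaching $\xi$ from $x$ (staying in the neighborhood), with a strictly positive surplus $\frac\delta2\int|\eta(s)-\xi|^2\,ds$ whenever the trajectory is nonconstant. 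The surplus term is what produces the strict inequality. One must be slightly careful with the boundary term $\alpha(t)\nu(\eta(t))$ in \eqref{Skor}, but since $\xi\in\Omega$ the relevant neighborhood $U(\xi)$ can be taken inside $\Omega$, so the Skorohod reflection is inactive and $\eta$ is an ordinary absolutely continuous curve.

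Combining: pick a near-optimal trajectory $\eta_n$ for $d_H(x_n,\xi)$ (with a small tolerance, using \eqref{AubryNeumann} to control the tail and a localization argument to keep $\eta_n$ inside $U(\xi)$, as in \cite{PR}); integrating the displayed inequality along $\eta_n$ from $0$ to its arrival time gives $W_\delta^-(\xi)-W_\delta^-(x_n)\geq -\int_0^{t_n}L(-\dot\eta_n,\eta_n)\,ds+\frac\delta2\int_0^{t_n}|\eta_n-\xi|^2\,ds$, i.e. $W_\delta^-(x_n)\leq W(x_n)-\frac\delta2\int_0^{t_n}|\eta_n-\xi|^2\,ds+(\text{tolerance})$, contradicting $W_\delta^-(x_n)\geq W(x_n)$ once the tolerance is chosen small relative to the quadratic lower bound on the surplus (which holds because a trajectory from $x_n\neq\xi$ cannot have arbitrarily small $\int|\eta-\xi|^2$).

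The main obstacle I anticipate is the localization: one needs to know that near-optimal trajectories for $d_H(x_n,\xi)$ with $x_n$ close to $\xi$ stay in the small neighborhood where \eqref{HJacSubsol} is valid — a priori they could wander out and come back. This is handled exactly as in the proof of Lemma 16 of \cite{PR}: using the Lipschitz bound on $W=d_H(\cdot,\xi)$ near $\xi$ and the coercivity of $L$ (inherited from the uniform ellipticity \eqref{Hamilt}, which makes $H$ quadratic in $p$ and hence $L$ quadratic in $v$), any excursion away from $\xi$ has cost bounded below, forcing near-optimal trajectories to remain confined for $x_n$ sufficiently close to $\xi$. Granting that confinement, the argument above goes through verbatim; alternatively, one can simply invoke the cited Lemma 16 of \cite{PR} whose proof is, as the authors state, identical. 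The only input specific to the present setting — that \eqref{Repr} holds with the \emph{maximal} $\Gamma$, so that $\Gamma-\Gamma_\delta^-=\delta D$ is positive definite — has already been arranged in the construction of $\Gamma_\delta^-$ via \eqref{Lyap}--\eqref{Matsubs}.
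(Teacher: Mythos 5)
Your proposal is essentially the paper's own proof (the paper outsources it to Lemma~16 of \cite{PR} and spells out the analogous argument in Lemma~\ref{Lcompar}): the same chain of ideas — $W=d_H(\cdot,\xi)$ near $\xi$, the strict-subsolution bound \eqref{HJacSubsol}, Fenchel duality integrated along near-optimal Skorohod trajectories, and the confinement of those trajectories via the isolated-Aubry-point argument — and you correctly anticipate the localization obstacle and its resolution. One caution: the opening observation invoking \eqref{Repr} is circular (the lemma you are proving is one of the two halves used to establish \eqref{Repr}), but you flag it as mere motivation and the actual argument does not use it; also, the paper obtains strictness more cheaply by proving $W_\delta^-\leq W$ and then passing to a slightly larger $\delta$ (using positive definiteness of $D$), which avoids quantifying the surplus term $\int|\eta-\xi|^2$ from below.
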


\noindent
Next consider the function  $W_\delta^+(x):= \Gamma_\delta^+(x-\xi)\cdot (x-\xi)$.

\begin{lem}
\label{Lsupers}
The strict pointwise inequality $ W_\delta^+(x)> W(x)$ holds in a punctured neighborhood of $\xi$ for sufficiently
small $\delta>0$.
\end{lem}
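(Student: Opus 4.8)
The plan is to compare $W_\delta^+$ with a sharp quadratic upper bound for the distance $d_H(\cdot,\xi)$, after first checking that $W_\delta^+$ is a \emph{strict} classical supersolution of (\ref{HamEq}) near $\xi$. For the latter, substituting $\Gamma_\delta^+=\Gamma+\delta D$ into the left‑hand side of (\ref{Ric}) and using the Lyapunov equation (\ref{Lyap}) gives, as the counterpart of (\ref{Matsubs}),
\[
4\Gamma_\delta^+Q\Gamma_\delta^+-\Gamma_\delta^+B-B^*\Gamma_\delta^+\;=\;2\delta I+4\delta^2 DQD\;\geq\;2\delta I\,,
\]
where we used $DQD>0$. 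Since $b(\xi)=0$, Taylor expanding $a_{ij}$ and $b_i$ at $\xi$ in $H(\nabla W_\delta^+(x),x)=H(2\Gamma_\delta^+(x-\xi),x)$, exactly as in the derivation of (\ref{HJacSubsol}), then yields $H(\nabla W_\delta^+(x),x)\geq\tfrac{\delta}{2}|x-\xi|^2$ near $\xi$. Moreover $\Gamma=0$ solves (\ref{Ric}), so the maximal solution satisfies $\Gamma\geq 0$, while $D>0$ by (\ref{LyapFormulaIntegral}); hence $\Gamma_\delta^+>0$ and, by construction, $W_\delta^+(x)-\Gamma(x-\xi)\cdot(x-\xi)=\delta\,D(x-\xi)\cdot(x-\xi)\geq c_0|x-\xi|^2$ with some $c_0>0$ (depending on $\delta$).

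Next I would establish that $W(x)=d_H(x,\xi)\leq \Gamma(x-\xi)\cdot(x-\xi)+o(|x-\xi|^2)$ as $x\to\xi$. For this, insert into (\ref{dfunction}) the curve $\eta_x(t)=\xi+e^{(B-4Q\Gamma)t}(x-\xi)$, i.e. the reversal of the Hamiltonian characteristic carried by the invariant Lagrangian graph $p=2\Gamma z$. Since all eigenvalues of $4Q\Gamma-B$ have positive real part (Theorem 9.1.3 in \cite{LR}), for $x$ close to $\xi$ the curve $\eta_x$ stays in $\Omega$ within distance $O(|x-\xi|)$ of $\xi$ and $\eta_x(t)\to\xi$ as $t\to\infty$; no Skorohod reflection occurs because $\xi$ is interior. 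A direct computation using the Riccati relation (\ref{Ric}) shows that, to leading order in $|x-\xi|$,
\[
L\bigl(-\dot\eta_x(t),\eta_x(t)\bigr)\;=\;-\frac{d}{dt}\Bigl(\Gamma\bigl(\eta_x(t)-\xi\bigr)\cdot\bigl(\eta_x(t)-\xi\bigr)\Bigr)\,,
\]
so that $\int_0^\infty L(-\dot\eta_x,\eta_x)\,dt=\Gamma(x-\xi)\cdot(x-\xi)$ up to an $o(|x-\xi|^2)$ error coming from the higher‑order parts of $H$ and $L$ along $\eta_x$; truncating $\eta_x$ at a time $T=T(x)\to\infty$ (chosen so that $|\eta_x(T)-\xi|=o(|x-\xi|^2)$) and closing it up to $\xi$ by a short arc adds another $o(|x-\xi|^2)$. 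The resulting admissible curve yields the announced bound via (\ref{dfunction}).

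Combining the two steps, for $0<|x-\xi|$ small enough
\[
W_\delta^+(x)-W(x)\;\geq\;\delta\,D(x-\xi)\cdot(x-\xi)-o(|x-\xi|^2)\;>\;0\,,
\]
which is the claim. Equivalently, one may argue by contradiction: a nonnegative maximum of $W-W_\delta^+$ over a small closed ball about $\xi$ cannot be attained at an interior point $\neq\xi$ — there the viscosity subsolution property of $W$, tested against the smooth $W_\delta^+$, would contradict the strict supersolution inequality — nor at $\xi$ with positive value, so it would lie on the bounding sphere, where the quadratic upper bound for $d_H(\cdot,\xi)$ excludes it. I expect the genuine obstacle to be the second step: in contrast with Lemma \ref{Lsubs}, where $W_\delta^-\leq W$ is automatic because $W=d_H(\cdot,\xi)$ is the maximal subsolution vanishing at $\xi$, the inequality $W\leq W_\delta^+$ is not free for a supersolution, so one really needs the quasi‑optimal trajectory realising the local quadratic asymptotics of $d_H(x,\xi)$ (or, equivalently, the comparison principle with the Aubry set as a uniqueness set, cf. Lemma \ref{Lcompar}); the careful bookkeeping of the $o(|x-\xi|^2)$ corrections — higher‑order terms of $H$ and $L$, the finite‑time truncation, the closing arc — is where the work lies.
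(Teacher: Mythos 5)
Your proposal is correct and follows essentially the same route as the paper: the core step is inserting into the variational formula $W(x)=d_H(x,\xi)$ the quasi-optimal trajectory $\dot z=-(4Q\Gamma-B)z$ and using the Riccati equation (\ref{Ric}) to evaluate the action as $\Gamma(x-\xi)\cdot(x-\xi)$ up to higher-order corrections, after which the positive term $\delta D(x-\xi)\cdot(x-\xi)$ gives the strict inequality — exactly the paper's argument (which gets the sharper $O(|x-\xi|^3)$ error). The preliminary strict-supersolution computation for $\Gamma_\delta^+$ and the alternative comparison argument you sketch are not needed for this lemma, and you rightly identify the quadratic upper bound on $d_H(\cdot,\xi)$ as the real content.
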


\begin{proof}
According to (\ref{Ident}), the following inequality holds
$$
W(x)\leq \int_0^t L(-v(\tau),\xi+\eta(\tau)){\rm d}\tau
$$
for every control  $v(\tau)$  such that the solution of the ODE
$$
\dot\eta(\tau)=v(\tau), \quad  \eta(0)=z:=x-\xi
$$
vanishes at the final time $t$ and remains in a small neighborhood
of 0 for any $0\leq\tau\leq t$. We can take the final time
$t=+\infty$ and construct $v(\tau)$ by setting
$v(\tau)=-(4Q\Gamma-B)\eta(\tau)$, where $\eta(\tau)$ in the
solution of the ODE
$$
\dot\eta=-(4Q\Gamma-B)\eta,\quad \eta(0)=z.
$$
As already mentioned (see Theorem 9.1.3. in  \cite{LR}) all the eigenvalues of
the matrix $4Q\Gamma-B$ have positive real parts, therefore
$|\eta(\tau)|\leq C |z|$ and $\eta(\tau)\to 0$ as $\tau\to
+\infty$. Moreover,  the latter convergence is exponentially fast.

Thus we have
$$
\begin{aligned}
L(-v(\tau), \eta(\tau)+\xi)&=\frac{1}{4}a^{ij}(\xi+\eta)(-\dot\eta_i+b_i(\eta))(-\dot\eta_i+b_i(\eta))=\\
&=\frac{1}{4}a^{ij}(\xi)(-\dot\eta_i+B_{ik}\eta_k)(-\dot\eta_j+B_{jl}\eta_l)+O(|\eta|^3),
\end{aligned}
$$
where $a^{ij}(x)$ denote the entries of the matrix inverse to
$\bigl( a_{ij}(x) \bigr)_{i,j=\overline{1,N}}$. Next recall that
$Q_{ij}=a_{ij}(\xi)$ and that $\Gamma$ solves (\ref{Ric}). Taking
this into account we obtain
$$
\begin{aligned}
\int_0^\infty L(\eta+\xi, -v(\tau))&=\frac{1}{4}\int_0^\infty a^{ij}(\xi)(-\dot\eta_i+B_{ik}\eta_k)(-\dot\eta_j+B_{jl}\eta_l)+O(|z|^3)=\\
&=-2\int_0^\infty \Gamma\eta\cdot \dot\eta {\rm d}\tau + \int_0^\infty \Gamma\eta\cdot (\dot\eta +B\eta) {\rm d}\tau +O(|z|^3)=\\
&=\Gamma z\cdot z  + \int_0^\infty\eta\cdot(-4\Gamma Q\Gamma+\Gamma B+B^*\Gamma)\eta {\rm d}\tau+O(|z|^3)=\\
&=\Gamma_{ij}z_i z_j +O(|z|^3).
\end{aligned}
$$
Finally, since by the definition of $ \Gamma_\delta^+$, $ \Gamma_\delta^+=\Gamma+\delta D$ with $D>0$, then
for sufficiently small $z\not=0$
we have
$$
W(z+\xi)\leq\Gamma z\cdot z +O(|z|^3)< \Gamma_\delta^+ z\cdot z.
$$
\end{proof}

Lemmas \ref{Lsubs} and \ref{Lsupers} show that functions $W^{\pm}_\delta$ do satisfy
conditions of Lemma \ref{MainTechLem}. To complete the proof
of Lemma  \ref{MainTechLem} in the case of $\mathcal{M}$ being a fixed point in $\Omega$
we define functions $W^{\pm}_{\delta,\ve}$ simply by setting $W^{\pm}_{\delta,\ve}:=W^{\pm}_\delta$.
Thanks to (\ref{HJacSubsol}) we have
\begin{equation}
\label{UpperBoundVerify}
\limsup_{\ve\to 0}\Bigl( a_{ij}(\xi_\ve)\frac{\partial^2 W_{\delta,\ve}^-}{\partial x_i \partial x_j}(\xi_\ve)
+\frac{1}{\ve}H(\nabla W_{\delta,\ve}^-(\xi_\ve), \xi_\ve)
+c(\xi_\ve)\Bigr)\leq-2a_{ij}(\xi)(\Gamma_{ij}-\delta D_{ij}) +c(\xi),
\end{equation}
as soon as $\xi_\ve \to \xi$ when $\ve\to 0$.
According to  Proposition 20 in \cite{PR},  $-2a_{ij}(\xi)\Gamma_{ij} +c(\xi)=\sigma(\{\xi\})$,
thus (\ref{UpperBoundVerify}) yields (\ref{forUpperBound}).
Similarly one verifies that $W_{\delta,\ve}^+$ satisfies (\ref{forLowerBound}).

\section{Construction of test functions: case of fixed points on $\partial \Omega$}
\label{FP}

Consider now the case of significant component of the Aubry set  $\mathcal{A}_H$ being a
hyperbolic fixed point $\xi$ of
the ODE $\dot x=b_\tau (x)$ on $\partial \Omega$, where $b_\tau (x)$ denotes the tangential
component of the
vector field $b(x)$ on
$\partial \Omega$. As above, without loss of generality, we assume that $W(\xi)=0$.

It is convenient to introduce local coordinates near $\partial \Omega$ so that
$x=X(z_1,\dots, z_N)$ with $z_N=z_N(x)$ being the distance from
$x$ to $\partial\Omega$ ($z_N(x)>0$ if $x\in\Omega$) and
$z^\prime=(z_1,...,z_{N-1})$ representing coordinates  on $\partial \Omega$ in a neighborhood of the
point $\xi$. The latter coordinates are chosen so that the map
$X(z^\prime,z_N)$ is $C^2$-smooth and $z^ \prime(\xi)=0$.
Moreover, the matrix $\Bigl(\frac{\partial X_i}{\partial
z_j}\Bigr)_{i,j=\overline{1,N}}$ is orthogonal when $z^\prime=0$
and $z_N=0$ (at the point $\xi$).  In these new variables
equations (\ref{HamEq}) and (\ref{PrII}) read
\begin{equation}
\label{nHamJac} S(\nabla_z W,z)=0
\end{equation}
and
\begin{equation}
\label{nperturbed} -\ve a_{ij} \left(X(z)\right) \mathcal{T}^{-1}_{ki} (z)
\frac{\partial} {\partial z_k}\biggl( \mathcal{T}^{-1}_{lj}(z)\frac{\partial
W_\ve} {\partial z_l}\biggr)+S(\nabla_z W_\ve,z)
=\ve\bigl(\lambda_\ve - c(X(z))\bigr),
\end{equation}
where
$$
S(p,z)= a_{ij} (X(z)) \mathcal{T}^{-1}_{ki}(z) \mathcal{T}^{-1}_{lj}(z) p_k
p_l-b_i(X(z)) \mathcal{T}^{-1}_{ki}(z) p_k
$$
and $\Bigl(\mathcal{T}^{-1}_{ij}(z)\Bigr)_{i,j=\overline{1,N}}$ is the
inverse matrix to $\Bigl(  \frac{\partial X_i} {\partial
z_j}(z)\Bigr)_{i,j=\overline{1,N}}$. Note that according to
hypothesis (\ref{H3})
\begin{equation}
\label{H3newcoord} b_i(X(z))\mathcal{T}^{-1}_{Ni}(z)<0 \ \ \text{for
sufficiently small}\ |z|.
\end{equation}


Like in Section \ref{FixP} we construct the leading term of the
asymptotic expansion of $W$ near the fixed point $\xi$ in the form
of a quadratic function. Taking into account the boundary
condition $\frac{\partial W}{\partial z_N}=0$ (that is
(\ref{HamBc}) rewritten in aforementioned local coordinates) we
write down the following ansatz
$$
W(X(z^\prime,z_N)) = \tilde\Gamma_{ij}z_i^\prime z_j^\prime
+o(|z|^2+z_N^2).
$$
with a symmetric $(N-1)\times (N-1)$ matrix $\widetilde\Gamma$ satisfying
the Riccati equation
\begin{equation}
\label{Ric1} 4\tilde \Gamma \tilde Q \tilde \Gamma-\tilde \Gamma \tilde B
-{\tilde B}^*\tilde\Gamma=0,
\end{equation}
where $\tilde Q=\Bigl(  a_{ij}(\xi) \mathcal{T}^{-1}_{ki}(0)\mathcal{T}^{-1}_{lj}(0)
\Bigr)_{k,l=\overline{1,N-1}}$ and $ \tilde B=\Bigl(
\mathcal{T}^{-1}_{ki}(0)\frac{\partial b_i} {\partial x_j}\bigl(\xi\bigr)
 \frac{\partial X_j} {\partial z_l}(0)
 \Bigr)_{k,l=\overline{1,N-1}}$. Note that  $\tilde B$ is
 nothing but the matrix in the ODE $\dot z^\prime= \tilde B z^\prime$ obtained by
linearizing the ODE $\dot x=b_\tau(x)$ near $\xi$  in the local
coordinates $z^\prime=(z_1^\prime,\dots,z_{N-1}^\prime)$ on
$\partial \Omega$.

Let $\tilde \Gamma$ be the maximal symmetric solution of (\ref{Ric1}),
and let $\tilde D$ be a solution of the Lyapunov matrix
equation
\begin{equation}
\label{Lyap1}
\tilde D (4\tilde \Gamma \tilde Q - \tilde B)+ (4\tilde \Gamma \tilde Q - \tilde B)^* \tilde D=2I.
\end{equation}
By Theorem 9.1.3 in  \cite{LR}
 all the eigenvalues
of the matrix $4\tilde\Gamma\tilde Q -\tilde B$
have positive real parts, therefore (\ref{Lyap1}) has the unique solution $\tilde D$ given by
$$
\tilde D= 2\int_{-\infty}^0 e^{(4\tilde \Gamma \tilde Q - \tilde B)^* t} e^{(4\tilde\Gamma\tilde Q -
\tilde B)t}{\rm d}t,
$$
which is a symmetric positive definite matrix. Now introduce
functions
\begin{equation}
\label{testf}
W^{\pm}_{\delta}(z^\prime,z_N)=(\tilde \Gamma \pm\delta
\tilde D)_{ij}z_i^\prime z_j^\prime \pm\delta z^2_N
\end{equation}
depending on the parameter $\delta >0$.

\begin{lem}
\label{Lcompar}
Let  $\delta>0$ be sufficiently small. Then, for small $|z|\not=0$ such that  $X(z)\in\overline\Omega$,
we have 
\begin{equation}
\label{subsupersolut}
W^{-}_{\delta}(z)< W(X(z))< W^{+}_{\delta}(z).
\end{equation}
\end{lem}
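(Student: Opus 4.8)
The plan is to establish the two-sided bound \eqref{subsupersolut} by the same barrier/control argument used for interior fixed points in Section~\ref{FixP} (Lemmas \ref{Lsubs} and \ref{Lsupers}), suitably adapted to the boundary geometry through the local coordinates $z=(z',z_N)$. First I would verify that $W^{-}_\delta$ is a strict viscosity subsolution of the Hamilton--Jacobi problem near $\xi$, respecting the boundary condition. Substituting \eqref{testf} into $S(\nabla_z W^{-}_\delta,z)$ and expanding at $z=0$: the $z'$-block produces, via the Riccati equation \eqref{Ric1} and the Lyapunov equation \eqref{Lyap1}, a term bounded above by $-c\delta|z'|^2$ for small $\delta$, exactly as \eqref{Matsubs}$\Rightarrow$\eqref{HJacSubsol} in the interior case; the new feature is the $z_N$-terms. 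Here I would use \eqref{H3newcoord}, i.e. $b_i(X(z))\mathcal{T}^{-1}_{Ni}(z)<0$, which guarantees that the contribution of $\partial_{z_N}W^{-}_\delta=-2\delta z_N$ to $S$ is $-b_i\mathcal{T}^{-1}_{Ni}(-2\delta z_N)+O(\delta^2 z_N^2)+O(\delta|z'| z_N)$, and since $-b_i\mathcal{T}^{-1}_{Ni}>0$ while $z_N\ge 0$ in $\overline\Omega$, this term is $\le 0$ up to the cross and quadratic corrections, which are absorbed by the strictly negative $z'$-term for $\delta$ small. Thus $S(\nabla_z W^{-}_\delta,z)\le -c\delta|z|^2$ in $\{z_N\ge 0\}$ near $0$; moreover $\partial_{z_N}W^{-}_\delta=-2\delta z_N=0$ on $\partial\Omega$ (where $z_N=0$), so the Neumann boundary condition for the subsolution property at boundary points is verified with equality, and at interior contact points the strict HJ inequality does the job. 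A comparison argument (identical in structure to Lemma~16 of \cite{PR}, or to the direct control-theoretic argument below) then gives $W^{-}_\delta(z)<W(X(z))$ in a punctured neighborhood.

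For the upper bound $W(X(z))<W^{+}_\delta(z)$ I would mirror the proof of Lemma~\ref{Lsupers}, using the representation $W(X(z))=d_H(X(z),\xi)$ valid near $\xi$ (after normalizing $W(\xi)=0$), i.e.
\[
W(X(z))\le \int_0^{\infty} L(-v(\tau),\eta(\tau))\,{\rm d}\tau
\]
for any admissible Skorohod trajectory $\eta$ from $X(z)$ to $\xi$. I would build such a trajectory in the $z'$-coordinates by solving the linearized stable ODE $\dot\eta'=-(4\tilde Q\tilde\Gamma-\tilde B)\eta'$ with $\eta'(0)=z'$ and keeping $\eta$ on $\partial\Omega$ (so $z_N\equiv 0$ along the curve and the reflection term $\alpha(t)\nu$ is engaged as needed by the Skorohod dynamics, consistent with $b_\nu>0$). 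Since all eigenvalues of $4\tilde Q\tilde\Gamma-\tilde B$ have positive real parts (Theorem 9.1.3 in \cite{LR}), $\eta'(\tau)\to 0$ exponentially and $|\eta'(\tau)|\le C|z'|$. Computing $L$ along this curve, expanding $a^{ij}$ and $b_i$ to leading order and using that $\tilde\Gamma$ solves \eqref{Ric1}, I get, exactly as in Section~\ref{FixP},
\[
\int_0^\infty L(-v(\tau),\eta(\tau))\,{\rm d}\tau=\tilde\Gamma_{ij}z'_i z'_j+O(|z'|^3)=W^{+}_\delta(z'-\text{part})+O(|z'|^3)-\delta|z'|^2\tilde D\cdots,
\]
so that $W(X(z))\le \tilde\Gamma z'\cdot z'+O(|z'|^3)$. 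For $z$ with $z_N>0$, i.e. $X(z)\in\Omega$, one first reaches $\partial\Omega$ by a short controlled segment of length $O(z_N)$, contributing $O(z_N)$ to the action (in fact $O(z_N^2)$ after optimizing, since near $\xi$ the running cost of moving normally is quadratic in the displacement once $\nabla W$ vanishes there); then $W^{+}_\delta(z)=\tilde\Gamma z'\cdot z'+\delta|z'|^2\tilde D\cdots+\delta z_N^2$ dominates thanks to the strictly positive terms $\delta|z'|^2$ (for the $z'$-direction, since $\tilde D>0$) and $\delta z_N^2$ (for the normal direction), for small $z\ne 0$.

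The main obstacle I anticipate is handling the normal direction $z_N$ cleanly in both inequalities simultaneously: the quadratic ansatz \eqref{testf} is only $\pm\delta z_N^2$ in that variable — there is no matching of a Riccati-type leading term there because the true solution $W$ is flat to second order in $z_N$ (Neumann condition) — so one must check that the error terms coupling $z'$ and $z_N$ (of size $O(\delta|z'|z_N)$ and $O(|z|^3)$) are genuinely absorbed by the $\mp c\delta|z'|^2$ and $\pm\delta z_N^2$ terms for $\delta$ small, uniformly over $\{z_N\ge 0,\ |z|\le r_\delta\}$. This requires choosing the neighborhood radius depending on $\delta$ and using $b_i(X(z))\mathcal{T}^{-1}_{Ni}(z)\le -c_0<0$ from \eqref{H3newcoord} to get a favorable \emph{sign} (not merely a bound) for the dominant $z_N$-contribution $-2\delta z_N\, b_i\mathcal{T}^{-1}_{Ni}$ to $S$. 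Once the sign structure is pinned down, the remaining work is the routine expansion and the standard comparison/control estimates already carried out in Section~\ref{FixP} and in \cite{PR}, so I would present the boundary case by emphasizing only these new $z_N$-terms and referring to the interior case for the $z'$-block computations.
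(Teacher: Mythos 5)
Your overall plan is the paper's plan: the lower bound via the strict-subsolution property of $W_\delta^-$ combined with the line-integral/Fenchel argument along Skorohod curves, and the upper bound via an explicit near-optimal trajectory in the representation $W(x)=d_H(x,\xi)$. The sign analysis for the $z_N$-terms using \eqref{H3newcoord} in the subsolution verification is correct, and your bookkeeping for the cross terms $O(\delta|z'|z_N)$ can indeed be closed by Cauchy–Schwarz against the $-c\delta|z'|^2$ and $-c_0\delta z_N$ contributions. So the $W_\delta^- < W$ half is essentially sound.

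There is, however, a genuine gap in the upper bound, precisely in the normal-direction segment. You estimate the cost of reaching $\partial\Omega$ from $X(z',z_N)$ as $O(z_N)$, improved "after optimizing" to $O(z_N^2)$, on the grounds that "the running cost of moving normally is quadratic in the displacement once $\nabla W$ vanishes there." This reasoning is off on two counts. First, $L(-v,\eta)$ is small because $v$ is close to $b(\eta)$, not because $\nabla W$ is small at the Aubry set; the cost of an arbitrary normal segment is $O(1)\cdot\Delta t = O(z_N)$, and nothing about $\nabla W$ improves this without choosing the control wisely. Second, and more seriously, $O(z_N^2)$ is not good enough: on the ray $z'=0$, $z_N>0$, you need $W(X(0,z_N))< W^+_\delta(0,z_N)=\delta z_N^2$, and an estimate $W\le C z_N^2$ with unknown $C$ does not give this for $\delta<C$. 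The required estimate is $o(z_N^2)$, in fact $O(|z|^3)$. The paper obtains it by taking the first segment to satisfy $\dot\zeta_N=b_i(X(z',\zeta_N))\mathcal T^{-1}_{Ni}(z',\zeta_N)$, i.e. following the normal component of the drift, so that $\dot\eta=b(\eta)+O(|z|)$ (the $O(|z|)$ being the residual tangential component, which vanishes at $\xi$); then $L(-\dot\eta,\eta)=O(|z|^2)$ pointwise and, since $\Delta t=O(z_N)$ by $b_\nu>0$, the total cost is $O(|z|^2 z_N)=O(|z|^3)\ll\delta z_N^2$. This "follow $b_\nu$ into the boundary" idea is the missing ingredient; once you insert it, your supersolution/trajectory computation on $\partial\Omega$ (which is correct as written) closes the argument exactly as in the paper.

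One further small point: the Neumann condition alone only gives $\partial_{z_N}W=0$ at $z_N=0$, i.e. first-order flatness. The fact that $W$ has no $z_N^2$-term in the ansatz comes from $b_\nu>0$ (the drift transports the zero level of $W$ off the boundary quickly), not from the boundary condition per se; that is why the control-theoretic estimate above, and not the ansatz, is what must carry the upper bound in the $z_N$-direction.
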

\begin{proof}
By virtue of the definition of $W^{\pm}_\delta$ it suffices to  prove (\ref{subsupersolut}) with non strict
inequalities in place of strict ones an then pass to slightly bigger $\delta$.

The proof of the inequality $W_\delta^{-}\leq W$ is based on the
following two facts. First, we use  the fact that
$W(x)=d_H(x,\xi)$ in a neighborhood of $\xi$. Moreover, for a
given $\delta^\prime>0$ there exists $\delta>0$ such that if
$|x-\xi|<\delta$ then the minimization in (\ref{dfunction}) is
actually restricted to testing curves $\eta(\tau)$ which do not
leave the set $\{|\eta-\xi|<\delta^\prime\}$ (otherwise arguing as
in \cite[Lemma 19]{PR}  one can show that $\xi$ is not an isolated
point of the Aubry set ${\mathcal A}_H$, contradicting
(\ref{H1})). Second, considering, with a little abuse of notation,
$W_\delta^{-}(x)=W_\delta^{-}(X^{-1}(x))$ we have for sufficiently
small $\delta>0$
\begin{equation}
\label{testsubsol}
 H(\nabla W_\delta^{-},x)\leq -\delta |x-\xi|^2 \ \text{in}\ \Omega,
 \ \text{and}\ \frac{\partial W_\delta^{-}}{\partial\nu}=0\
 \text{on}\ \partial \Omega,
\end{equation}
when $|x-\xi|<\delta^\prime$ with some $\delta^\prime>0$
independent of $\delta$. This follows from the construction
(\ref{testf}) of $W_\delta^{\pm}$ and (\ref{Ric1}), (\ref{Lyap1}),
also taking into account (\ref{H3newcoord}).

Assume that $|x-\xi|<\delta$, and let $\eta(\tau)$ be a solution of
(\ref{Skor}) satisfying $\eta(0)=x$, $\eta(t)=\xi$ with a control
$v(\tau)$ such that $|\eta(\tau)-\xi|<\delta^\prime$ for all
$0\leq \tau\leq t$. Then
$$
W_\delta^-(x)=-\int_0^t \nabla W^-_\delta(\eta)\cdot\dot\eta\,
{\rm d}\tau= \int_0^t \nabla W^-_\delta(\eta)\cdot(-v(\tau))\,
{\rm d}\tau,
$$
where we have used the fact that $\frac{\partial
W^-_\delta}{\partial \nu}=0$ on $\partial \Omega$. It follows  by Fenchel's inequality
$p\cdot(-v)\leq L(-v,\eta)+H(p,\eta)$ that
$$
W_\delta^-(x)\leq \int_0^t L(-v,\eta)\,{\rm d}\tau +\int_0^t
H(\nabla W^-_\delta,\eta){\rm d}\tau \leq \int_0^t
L(-v,\eta)\,{\rm d}\tau.
$$
Therefore by (\ref{dfunction}) we obtain  $W_\delta^-(x)\leq
W(x)$.


In order to prove the second inequality in  (\ref{subsupersolut})
for a given $x=X(z^\prime,z_N)$ we construct a test curve
$\eta(\tau)$ first on a small interval $(0,\Delta t)$ by setting
$\eta(\tau)=X(z^\prime,\zeta_N(\tau))$, $\zeta_N(\tau)$ being the
solution of ODE
$\dot\zeta_N(\tau)=b_i(X(z^\prime,\zeta_N))\mathcal{T}^{-1}_{Ni}(z^\prime,
\zeta_N)$ with the initial condition $\zeta_N(0)=z_N$, and choosing
$\Delta t$ from the conditions $\zeta_N(\Delta t)=0$,
$\zeta_N(\tau)>0$ for $\tau<\Delta t$. Thanks to (\ref{H3newcoord})
we have $\Delta t=O(z_N)$. Then, since
$$
\begin{aligned}
\dot\eta_i
&
=\frac{\partial X_i}{\partial
z_N}(z^\prime,\zeta_N)b_k(\eta) \mathcal{T}^{-1}_{Nk}(z^\prime,\zeta_N)\\
&
=\frac{\partial X_i}{\partial
z_j}(z^\prime,\zeta_N)b_k(\eta) \mathcal{T}^{-1}_{jk}(z^\prime,\zeta_N)
-\frac{\partial X_i}{\partial z^\prime_j}(z^\prime,\zeta_N)b_k(\eta)
\mathcal{T}^{-1}_{jk}(z^\prime,\zeta_N)=b_i(\eta)+O(|z|)
\end{aligned}
$$
(recall that the tangential component $b_\tau$ on $\partial \Omega$ vanishes at the point $\xi$)
we obtain
\begin{equation}
\label{firstint}
 \int_0^{\Delta t} L(-\dot\eta,\eta){\rm
d}\tau=O(|z|^3).
\end{equation}
Next we construct $\eta(\tau)$ for $\tau>\Delta t$ which connects
point $X(z^\prime,0)$ to $\xi$. Following closely the line of
Lemma \ref{Lsupers} we introduce $\zeta^\prime(\tau)$ by solving
the equation $\dot\zeta^\prime(\tau)=-(4 \tilde Q\tilde \Gamma - \tilde B)\zeta^\prime$
with the initial condition $\zeta^\prime(\Delta t)=z^\prime$ and
set $\eta(\tau)=X(\zeta^\prime(\tau),0)$. Then
$\eta(\tau)$ solves (\ref{Skor}) for $\tau>\Delta t$ with
$$
v(\tau):=\dot\eta(\tau)+\nu(\eta)b_\nu(\eta)-
4\frac{\partial X}{\partial z_N}(\zeta^\prime,0)\mathcal{T}^{-1}_{Ni}(0)a_{ij}(\xi)\mathcal{T}^{-1}_{lj}(0)
\tilde \Gamma_{lm}\zeta^\prime_m
$$
(note that $\frac{\partial X}{\partial
z_N}(\zeta^\prime,0)=-\nu(\eta)$ and
$b_\nu(\eta)+4\mathcal{T}^{-1}_{Ni}(0)a_{ij}(\xi)\mathcal{T}^{-1}_{lj}(0)\Gamma_{lm}\zeta^\prime_m>0$
as soon as $|z^\prime|$ is sufficiently small) and using
(\ref{Ric1}) we obtain
\begin{equation}
\label{secondint}
\begin{aligned}
 \int_{\Delta t}^\infty L(&-v(\tau),\eta(\tau))\,{\rm
d}\tau=\frac{1}{4}\int_{\Delta t}^\infty
a^{ij}(\xi)(-v_i+b_i)(-v_j+b_j)\,{\rm d}\tau+O(|z|^3)
\\
&=4\int_{\Delta t}^\infty a^{ij}(\xi)\Bigl(\frac{\partial X_i}
{\partial z_k^\prime}(0) \tilde Q_{kl}\tilde \Gamma_{lm}\zeta^\prime_m+\frac{\partial X_i}
{\partial z_N}(0)\mathcal{T}^{-1}_{Nk}(0)a_{kl}(\xi)\mathcal{T}^{-1}_{ml}(0)\tilde\Gamma_{m n}\zeta^\prime_n\Bigr)\\
&\quad\quad\quad
\quad\quad\times \Bigl(\frac{\partial X_j}
{\partial z_k^\prime}(0) \tilde Q_{kl}\tilde \Gamma_{lm}\zeta^\prime_m+\frac{\partial X_j}
{\partial z_N}(0)\mathcal{T}^{-1}_{Nk}(0)a_{kl}(\xi)\mathcal{T}^{-1}_{ml}(0)\tilde \Gamma_{m n}\zeta^\prime_n\Bigr)
\,{\rm
d}\tau+O(|z|^3)\\
&=4\int_{\Delta t}^\infty\tilde \Gamma\zeta^\prime\cdot
\tilde Q\tilde \Gamma\zeta^\prime\,{\rm d}\tau+O(|z|^3)
\\
&=-2\int_{\Delta
t}^\infty\tilde \Gamma\zeta^\prime\cdot \dot\zeta^\prime\,{\rm d}\tau +
\int_{\Delta t}^\infty\tilde \Gamma\zeta^\prime\cdot
\bigl(\dot\zeta^\prime+\tilde B\zeta^\prime)\,{\rm d}\tau+O(|z|^3)
=\tilde \Gamma_{ij} z^\prime_i z^\prime_j+O(|z|^3).
\end{aligned}
\end{equation}
The required upper bound $W\leq W_\delta^{+}$ now follows from
(\ref{firstint}) and (\ref{secondint}).
\end{proof}

Thus functions $W^{\pm}_\delta$ satisfy conditions of Lemma \ref{MainTechLem}, moreover
it follows from (\ref{testf}) in conjunction with (\ref{Ric1}), (\ref{Lyap1}), taking also into account (\ref{H3newcoord}),
that
\begin{equation*}
S(\nabla_z W^{+}_\delta (z),z)\geq 0\ \text{and}\  S(\nabla_z W^{-}_\delta (z),z)\leq 0\ \text{when} \ |z|
\ \text{is sufficiently small}.
\end{equation*}
Then we set  $$
W^{\pm}_{\delta,\ve}(z^\prime,z_N)
=W^{\pm}_\delta(z^\prime,z_N)\mp\ve^2z_N,
$$
and verify (similarly to the case of interior fixed points) that
conditions  (\ref{forLowerBound}) and (\ref{forUpperBound})
are satisfied.
%
%
%

\section{Construction of test functions: case of limit cycles in $\Omega$}
\label{LCin}
We proceed with the case when  a significant component
of the Aubry set  $\mathcal{A}_H$ is a limit cycle, assuming first
that it is situated entirely inside $\Omega$. Namely, let $\xi(t)$
be a periodic solution of the ODE $\dot\xi=b(\xi)$ whose minimal
period is $P>0$. We assume that $\mathcal{C}=\{\xi(t): t\in[0,P)
\}\subset \Omega$, $b(x)\not=0$ on $\mathcal{C}$ and $\mathcal{C}$
is a hyperbolic limit cycle, i.e. the linearized Poincar\'{e} map
associated to this cycle has no eigenvalues on the unit circle. In
order to study the local behavior  of
$W$ near the
cycle $\mathcal{C}$, perform a $C^2$-smooth change of coordinates
$x=X(z_1,\dots,z_{N-1},z_N)$ with $z_N$ representing the arc
length along the cycle and $z'=(z_1,\dots,z_{N-1})$ being some fixed
Cartesian coordinates in the hyperplanes orthogonal to the cycle.
Also we assume that $\mathcal{C}$ is oriented by the tangent
vector $b(\xi)/|b(\xi)|$, and $z'=0$ on $\mathcal{C}$. With this
change of coordinates equations (\ref{HamEq}) and (\ref{PrII}) take the form similar to
(\ref{nHamJac}) and (\ref{nperturbed}).
Assuming as above that $W(\mathcal{C})=0$, we postulate in the vicinity of the cycle (for sufficiently small $|z'|$)  the following ansatz for $W$:
\begin{equation}
\label{anscycle}
W(X(z',z_N))=\overline{\Gamma}_{ij}(t) z'_i z'_j+o(|z'|^2),
\end{equation}
where $t$ refers to the parametrization of the cycle determined by the equation $\dot\xi=b(\xi)$, $t\in[0,P)$.
Substitute $W$ in (\ref{nHamJac})
to find, after collecting quadratic terms and neglecting higher order terms,
\begin{equation}
\label{dRic}
\dot{\overline{\Gamma}}=
4\overline{\Gamma}\, \overline{Q}\, \overline{\Gamma} -\overline{\Gamma} \,\overline{B}-
\overline{B}^*\overline{\Gamma},
\end{equation}
where $\overline{Q}(t)$ and $\overline{B}(t)$ are $P$-periodic
$(N-1)\times(N-1)$ matrices whose entries are given by
\begin{equation}
\label{mQ} \overline{Q}_{kl}(t)= a_{ij}(\xi(t))\,
\mathcal{T}^{-1}_{ki}\bigl(0,z_N(\xi(t))\bigr)\,\mathcal{T}^{-1}_{lj}\bigl(0,z_N(\xi(t))\bigr),
\end{equation}
\begin{equation}
\label{mB} \overline{B}_{kl}(t)=
\mathcal{T}^{-1}_{ki}\bigl(0,z_N(\xi(t))\bigr)\frac{\partial b_i} {\partial
x_j}\bigl(\xi(t)\bigr)   \frac{\partial X_j} {\partial
z_l}\bigl(0,z_N(\xi(t))\bigr) -
\mathcal{T}^{-1}_{kj}\bigl(0,z_N(\xi(t))\bigr) \frac{\rm d}{{\rm d}t} \biggl(
\frac{\partial X_j} {\partial
z_l}\bigl(0,z_N(\xi(t))\bigr)\biggr).
\end{equation}
{Recall that  $\mathcal{T}^{-1}_{ij}\bigl(z^\prime,z_N)$ denote the entries of the matrix inverse to
$\bigl(\frac{\partial X_i}{\partial z_j}(z^\prime,z_N)\bigr)_{i,j=\overline{1,N}}$ and for brevity abusing slightly
the notation we set
\begin{equation}\label{def_Tkras}
\mathcal{T}^{-1}_{ij}(t):=\mathcal{T}^{-1}_{ij}(0,z_N(\xi(t)),\quad {\mathcal T}_{ij}(t)=
\frac{\partial X_i}{\partial z_j}\bigl(0,z_N(\xi(t))\bigr).
\end{equation}}

The matrix $\overline{Q}(t)$ being positive definite, it is known \cite{AFIJ} that Riccati equation   (\ref{dRic})
 has a maximal symmetric P-periodic solution $\overline{\Gamma}(t)$. We next show that (\ref{anscycle}) does
hold {with the mentioned  maximal solution $\overline{\Gamma}(t)$} under our standing hyperbolicity assumption on $\mathcal{C}$. Note that the ODE $\dot z'=\overline{B}z'$
 corresponds to the linearization of $\dot x=b(x)$ on the cycle $\mathcal{C}$  written in local coordinates;
 thus assuming the hyperbolicity of
$\mathcal{C}$ we require  that the fundamental  solution of the
ODE
$$
\frac{\partial\overline{\Phi}}{\partial t}(t,\tau)=\overline{B}(t)\overline{\Phi}(t,\tau), \qquad   \overline{\Phi}(\tau,\tau)=I,
$$
evaluated at $t=\tau+P$, has no eigenvalues with absolute value equal to 1.

\begin{lem}
\label{Lemupperb}
The following bound holds uniformly in $t\in[0,P) $ for sufficiently small $|z'|$,
\begin{equation}\label{est_in_l8}
W(X(z',z_N(\xi(t)))\leq\overline{\Gamma}_{ij}(t)z'_i z'_j
+C|z'|^3\log\frac{1}{|z'|}.
\end{equation}
\end{lem}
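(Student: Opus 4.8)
The strategy mirrors the construction in Lemma~\ref{Lsupers} and Lemma~\ref{Lcompar}: use the variational representation $W(x)=d_H(x,\mathcal{C})$ valid near the cycle, and build for each fixed base point $X(z',z_N(\xi(t)))$ an explicit admissible trajectory of the Skorohod problem (here with $\alpha\equiv0$, since $\mathcal{C}\subset\Omega$) that returns to $\mathcal{C}$ and whose action integral is at most $\overline{\Gamma}_{ij}(t)z_i'z_j'+C|z'|^3\log\frac{1}{|z'|}$. First I would, as in the earlier cases, reduce $d_H$ to a local minimization: for $|z'|$ small the competing curves in \eqref{dfunction} may be assumed to stay in a small tubular neighborhood of $\mathcal{C}$ (otherwise, arguing as in \cite[Lemma~19]{PR}, $\mathcal{C}$ would fail to be isolated, contradicting \eqref{H1}).

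\textbf{Construction of the test curve.} In the local coordinates $(z',z_N)$ I would look for $\eta(\tau)=X(\zeta'(\tau),z_N(\xi(\tau)))$, i.e.\ move along the cycle in the $z_N$-direction at the natural speed while letting the transverse part $\zeta'(\tau)$ evolve by the linear feedback law $\dot\zeta'=-(4\overline{Q}(\tau)\overline{\Gamma}(\tau)-\overline{B}(\tau))\zeta'$, with $\zeta'(t_0)=z'$ at the initial time $t_0$ corresponding to the base point. By the hyperbolicity hypothesis on $\mathcal{C}$ (equivalently: the monodromy of $\dot z'=\overline{B}z'$, hence also of the perturbed linear system defining $\zeta'$, has all eigenvalues with modulus $>1$ in the relevant spectral subspace after the Riccati correction — this is exactly the periodic analogue of Theorem~9.1.3 in \cite{LR}, available via \cite{AFIJ}), one gets $|\zeta'(\tau)|\le C e^{-\gamma(\tau-t_0)}|z'|$ for some $\gamma>0$. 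Then, expanding the Lagrangian, $L(-\dot\eta,\eta)=\frac14 a^{ij}(-\dot\eta_i+b_i)(-\dot\eta_j+b_j)$, the chosen motion makes the $O(|\zeta'|)$ (linear-in-$\zeta'$) part of $-\dot\eta+b$ vanish after projecting onto $b(\xi)$, leaving a quadratic-in-$\zeta'$ integrand; substituting $\dot\zeta'$ and using the Riccati equation \eqref{dRic} exactly as in the computation at the end of Section~\ref{FixP}, the quadratic part of $\int_{t_0}^\infty L\,{\rm d}\tau$ telescopes to $-2\int\overline{\Gamma}\zeta'\cdot\dot\zeta'\,{\rm d}\tau+\int\overline{\Gamma}\zeta'\cdot(\dot\zeta'+\overline{B}\zeta')\,{\rm d}\tau=\overline{\Gamma}(t_0)z'\cdot z'$, up to the cubic remainder.

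\textbf{The error term and the logarithm.} The only genuinely new point compared with the fixed-point case is the precise size of the remainder. The cubic Taylor error of the Lagrangian along $\eta$ contributes $\int_{t_0}^\infty O(|\zeta'(\tau)|^3)\,{\rm d}\tau=O(|z'|^3)$, which is harmless; the subtlety is the \emph{time-dependence} of $\overline{\Gamma}(t)$, $\overline{Q}(t)$, $\overline{B}(t)$ around the loop, together with the fact that the base time $t_0$ ranges over $[0,P)$ and $\zeta'$ must be evaluated at $\tau$ modulo $P$. I expect the $\log\frac{1}{|z'|}$ to enter because the curve must be followed for a time of order $\log\frac{1}{|z'|}$ before $|\zeta'|$ shrinks to, say, $|z'|^2$, and over that many periods the accumulated discrepancy between the "frozen" Riccati quadratic form and the true action, being $O(|\zeta'(\tau)|^2)$ per unit time, integrates to $\int_{t_0}^{t_0+C\log(1/|z'|)}O(e^{-2\gamma(\tau-t_0)}|z'|^2)\,{\rm d}\tau$ plus boundary mismatch terms of size $O(|z'|^2)$ coming from non-periodicity of $\zeta'$; cutting off the trajectory after time $\sim\log\frac{1}{|z'|}$ and splicing on a short controlled arc back to $\mathcal{C}$ (cost $O(|z'|^2\cdot|z'|)$ if one lands at distance $|z'|^2$) produces the stated bound $\overline{\Gamma}_{ij}(t)z_i'z_j'+C|z'|^3\log\frac{1}{|z'|}$. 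The main obstacle, then, is the careful bookkeeping of these lower-order-in-$|z'|$ but time-accumulating errors — showing that the only term that survives at order better than $|z'|^2$ is the advertised $|z'|^3\log\frac{1}{|z'|}$, which requires exploiting the $P$-periodicity of $\overline{\Gamma},\overline{Q},\overline{B}$ and the uniform exponential decay of $\zeta'$ rather than any pointwise cancellation.
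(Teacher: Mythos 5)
Your overall strategy matches the paper's: use the variational representation of $d_H$, build a trajectory that follows the cycle while the transverse coordinate decays exponentially under the Riccati feedback, truncate at time $T\sim\log\frac{1}{|z'|}$, and splice on a short arc back to $\mathcal{C}$. However, there is one genuine gap, and it is the key technical point of the paper's proof that you skipped over.

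You take the test curve to be $\eta(\tau)=X(\zeta'(\tau),z_N(\xi(\tau)))$, with the longitudinal coordinate evolving at the "natural" cycle speed, and you then assert that the $O(|\zeta'|)$ part of $-\dot\eta+b$ disappears "after projecting onto $b(\xi)$," leaving a quadratic integrand. This is where the argument fails. With the naive longitudinal choice, the local $N$-th component of $b(\eta)-\dot\eta$ is, to leading order, $\mathcal{T}^{-1}_{Ni}\bigl(\tfrac{\partial b_i}{\partial x_j}\mathcal{T}_{jl}-\dot{\mathcal T}_{il}\bigr)\zeta'_l$, which is generically $O(|\zeta'|)$ and \emph{not} annihilated by the Riccati structure, since the Riccati matrices $\overline{Q},\overline{B},\overline{\Gamma}$ are $(N-1)\times(N-1)$ and see only the transverse block. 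Because the Lagrangian $L=\tfrac14 a^{ij}(b_i-\dot\eta_i)(b_j-\dot\eta_j)$ is a positive-definite quadratic form, you cannot "project out" a longitudinal component of size $O(|\zeta'|)$: it contributes $O(|\zeta'|^2)$ to $L$ pointwise, which integrates to $O(|z'|^2)$ — the same order as the advertised main term $\overline{\Gamma}_{ij}(t)z'_iz'_j$ — and so the claimed upper bound is not achieved. The paper resolves this by introducing a correction $\zeta_N(\tau)$ in the arc-length coordinate, $\eta(\tau)=X(\zeta'(\tau),z_N(\xi(\tau))+\zeta_N(\tau))$, and choosing $\zeta_N$ as the solution of a first-order scalar ODE (\ref{auxODEN}) with forcing $R(\tau)$ built precisely from the offending longitudinal terms; the identity (\ref{invar1}) guarantees that this can be done consistently and that $|\zeta_N(\tau)|\le C|z'|$ via the explicit integral formula (\ref{integral}). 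With this correction the key estimate (\ref{prop_eta}) holds, $b(\eta)-\dot\eta$ is $4a\,\mathcal{T}^{-1}\overline{\Gamma}\zeta'+O(|z'|^2)$ in all components, and the telescoping Riccati computation goes through with a remainder $CT|z'|^3$. Your sketch also does not pin down where the $\log$ factor actually comes from: in the paper it arises because the per-unit-time Taylor remainder in the Lagrangian, $O(|z'|^3)$, accumulates over the time interval of length $T=\frac{1}{\delta}\log\frac{1}{|z'|}$; your discussion of "time-accumulating $O(|\zeta'(\tau)|^2)$ discrepancies" conflates this with the main term and would not lead to the right power. In short: the strategy is right, but without the $\zeta_N$ construction the action along your test curve cannot be bounded better than $\overline{\Gamma}(t)z'\cdot z'+O(|z'|^2)$, and the lemma would not follow.
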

\begin{proof}
We make use of variational representation  (\ref{dfunction}).
{A natural guess about  optimal test curve in (\ref{dfunction}) is that its first $N-1$
local coordinates are  given by (cf. Sections \ref{FixP} and \ref{FP})  }
\begin{equation}
\label{auxODE}
\dot\zeta'(\tau)=(\overline{B}-4\overline{Q}\,\overline{\Gamma})\zeta'(\tau),\quad\tau>t,\quad\zeta'(t)=z'.
\end{equation}
Thanks to Theorem 5.4.15 in \cite{AFIJ} the solutions of (\ref{auxODE}) are exponentially
stable, i. e.  $|\zeta'(\tau)|\leq Ce^{-\delta\tau}|z'|$ for some
$\delta>0$. The choice of the last local coordinate is a little bit involved. We set $\eta(\tau):=X(\zeta^\prime(\tau),z_N(\xi(\tau))+\zeta_N(\tau))$ and want to choose $\zeta_N(\tau)$
in such a way that $|\zeta_N(\tau)|<C|z^\prime|$, and
\begin{equation}\label{prop_eta}
b_i(\eta(\tau))-\dot\eta_i(\tau)=4a_{ij}(\xi(\tau))\mathcal{T}^{-1}_{lj}\bigl(0,\xi(\tau)\bigr)\overline{\Gamma}_{lm}(\tau)
\zeta'_m(\tau)+O(|z'|^2).
\end{equation}
We skip  for a moment the proof of the existence of such $\zeta_N$. It will be given later on.

Considering (\ref{prop_eta}) we obtain
$$
\begin{aligned}
\int\limits_t^TL(-\dot\eta,\eta)\mathrm{d}\tau &=
\frac{1}{4}\int\limits_t^T  a^{ij}(\eta(\tau))(-\dot\eta_i(\tau)+b_i(\eta(\tau))) (-\dot\eta_j(\tau)+b_j(\eta(\tau))){\rm d}\tau \\
&\leq 4\int\limits_t^T (\overline{\Gamma}(\tau)\zeta'(\tau))\cdot (\overline{Q}(\tau)\overline{\Gamma}(\tau)
\zeta'(\tau)){\rm d}\tau+CT|z'|^3
\end{aligned}
$$
In view of (\ref{auxODE}) and (\ref{dRic}) we have
$$
\begin{aligned}
\int\limits_t^TL(-\dot\eta,\eta)\mathrm{d}\tau &\leq\int\limits_t^T (\overline{\Gamma}\zeta')\cdot (\overline{B}\zeta'-\dot\zeta'){\rm d}\tau+CT|z'|^3\\
&=-\int\limits_t^T \frac{\rm d}{{\rm d}\tau}(\overline{\Gamma}\zeta'\cdot\zeta'){\rm d}\tau
+\int\limits_t^T (\overline{\Gamma}\zeta'\cdot(\overline{B}\zeta'+\dot\zeta')+\dot{\overline{\Gamma}}\zeta'\cdot\zeta'){\rm d}\tau
+CT|z'|^3\\
&\leq\overline{\Gamma} (t)z'\cdot z' +\int\limits_t^T (\dot{\overline{\Gamma}}-4\overline{\Gamma} \, \overline{Q}\, \overline{\Gamma}
+\overline{\Gamma}\, \overline{B}+\overline{B}^*\,\overline{\Gamma})\zeta'\cdot\zeta'){\rm d}\tau +CT|z'|^3\\
&=\overline{\Gamma}(t)z'\cdot z'+CT|z'|^3.
\end{aligned}
$$
If we choose $T:=\frac1\delta\log\frac1{|z'|}$, then $\mathrm{dist}(\eta(T), \mathcal{C})=O(|z'|^2)$, and
\begin{equation}\label{app_lagr}
\int\limits_t^TL(-\dot\eta,\eta)\mathrm{d}\tau\leq\overline{\Gamma}(t)z'\cdot z'+C|z'|^3\log\frac1{|z'|}.
\end{equation}

For constructing $\zeta_N$  we will need the following facts.
From the definition of $\mathcal{T}_{ij}$ in \eqref{def_Tkras} it follows that
$\mathcal{T}_{iN}(\tau)=\dot\xi_i(\tau)/|\dot\xi(\tau)|$. Then, since
$\ddot \xi_i (\tau)=\frac{\partial b_i}{\partial x_j}(\xi(\tau)) \dot\xi_j(\tau)$, we have
\begin{equation*}
\label{invar}
\frac{\partial b_i}{\partial x_j}(\xi(\tau))\mathcal{T}_{jN}(\tau)- \mathcal{\dot T}_{iN}(\tau)=
\frac{\dot\xi_i(\tau)\dot\xi_j(\tau)\ddot\xi_j(\tau)}{|\dot\xi(\tau)|^3}
=\mathcal{T}_{iN}(\tau)\frac{\dot\xi_j(\tau)\ddot\xi_j(\tau)}{|\dot\xi(\tau)|^2}
,\quad \forall i=1,\dots,N.
\end{equation*}
Multiplying this by  $
\mathcal{T}^{-1}(\tau)$,
we conclude that
\begin{equation}
\label{invar1}
\mathcal{T}^{-1}_{ki}(\tau)\Bigl(\frac{\partial
b_i}{\partial x_j}(\xi(\tau))\mathcal{T}_{jN}(\tau) -
\mathcal{\dot T}_{iN}(\tau)\Bigr)=
\begin{cases}
 \dot\xi_j(\tau)\ddot\xi_j(\tau)/|\dot\xi(\tau)|^2,\quad {\rm if}\ k=N,\\
0\quad {\rm if}\ k<N.
\end{cases}
\end{equation}
We proceed with constructing $\zeta_N$. By the definition of $\eta$ we have
\begin{equation}
\begin{aligned}
\dot\eta_i(\tau)&-b_i(\eta(\tau))=\frac{\rm d}{{\rm
d}t}\Bigl(\xi_i(\tau)+\mathcal{
T}_{ik}(\tau)\zeta^\prime_k(\tau)+\mathcal{ T}_{iN}(\tau)
\zeta_N(\tau)\Bigr)-b_i(\eta(\tau))+O(|z'|^2)
\\
&=\Bigl( \mathcal{\dot T}_{ik}(\tau)-\frac{\partial b_i}{\partial
x_j}(\xi(\tau))\mathcal{T}_{jk}(\tau)\Bigr)\zeta'_k(\tau)
+\Bigl( \mathcal{\dot T}_{iN}(\tau)-\frac{\partial b_i}{\partial x_j}(\xi(\tau))\mathcal{T}_{jN}(\tau)\Bigr)\zeta_N(\tau)\\
&\ \ +\mathcal{T}_{ik}(\tau)\dot\zeta'_k(\tau)+\mathcal{
T}_{iN}(\tau)\dot\zeta_N(\tau)+O(|z'|^2)\\
&=\mathcal{T}_{ik}(\tau)\mathcal{T}^{-1}_{kr}(\tau)\Big\{
\Bigl( \mathcal{\dot T}_{rk}(\tau)-\frac{\partial b_r}{\partial
x_j}(\xi(\tau))\mathcal{T}_{jk}(\tau)\Bigr)\zeta'_k(\tau)
+\Bigl( \mathcal{\dot T}_{rN}(\tau)-\frac{\partial b_r}{\partial x_j}(\xi(\tau))\mathcal{T}_{jN}(\tau)\Bigr)\zeta_N(\tau)\Big\}\\
&\ \ +\mathcal{T}_{ik}(\tau)\dot\zeta'_k(\tau)+\mathcal{
T}_{iN}(\tau)\dot\zeta_N(\tau)+O(|z'|^2).
\end{aligned}
\label{preobrazov}
\end{equation}
Substituting for $\dot\zeta'$ the expression on the right-hand side of \eqref{auxODE} and considering
(\ref{invar1}) yields
$$
b_i(\eta(\tau))-\dot\eta_i(\tau)=4a_{ij}(\xi(\tau))\mathcal{T}^{-1}_{lj}\bigl(0,\xi(\tau)\bigr)\overline{\Gamma}_{lm}(\tau)
\zeta'_m(\tau)+O(|z'|^2)-\mathcal{T}_{iN}(\tau)\dot\zeta_N(\tau)+\mathcal{T}_{iN}(\tau)R(\tau),
$$
where
$$
\begin{aligned}
R(\tau)&=\mathcal{T}^{-1}_{Ni}(\tau)\frac{\partial
b_i}{\partial x_j}(\xi(\tau))
\biggl( \mathcal{T}_{jN} (\tau) \zeta_N(\tau)+\mathcal{T}_{jl}(\tau) \zeta'_l(\tau)\biggr) \\
&-\mathcal{T}^{-1}_{Ni}(\tau) \biggl( \mathcal{\dot T}_{iN}(\tau) \zeta_N(\tau)+ \mathcal{\dot T}_{il}(\tau) \zeta'_l(\tau)\biggr) 
-4 \mathcal{T}^{-1}_{Ni}(\tau) a_{ij}(\xi(\tau))
\mathcal{T}^{-1}_{lj}(\tau)\overline{\Gamma}_{lm}(\tau)
\zeta'_m(\tau)
\end{aligned}
$$
Thus, in order to make (\ref{prop_eta})  hold, we choose $\zeta_N(\tau)$ as a solution of the following equation
\begin{equation}
\label{auxODEN}
\dot\zeta_N(\tau)=R(\tau)
\end{equation}
with the initial condition $\zeta_N(t)=0$.

From (\ref{invar1}) it follows that $|\dot\xi(\tau)|$ solves
$$
\frac{\rm d}{{\rm d}t}|\dot\xi(\tau)|= \bigl(
\mathcal{T}^{-1}_{Ni}(\tau)\frac{\partial
b_i}{\partial x_j}(\xi(\tau))\mathcal{T}_{jN}(\tau)-
\mathcal{T}^{-1}_{Ni}(\tau)\mathcal{\dot T}_{iN}(\tau)
\bigr)|\dot\xi(\tau)|,
$$
and we can write the solution $\zeta_N(\tau)$ of (\ref{auxODEN}) as
\begin{equation}
\begin{aligned}
\zeta_N(\tau)&=|\dot\xi(\tau)|\int\limits_0^{\tau}
\biggl(\mathcal{T}^{-1}_{Ni}(s)\frac{\partial
b_i}{\partial x_j}(\xi(s))
 \mathcal{T}_{jl} (s) \zeta'_l(s)
-\mathcal{T}^{-1}_{Ni}(s) \mathcal{\dot T}_{il}(s) \zeta'_l(s)\biggr) \frac{{\rm d}s}{|\dot\xi(s)|}\\
&-4|\dot\xi(\tau)|\int\limits_0^{\tau} \mathcal{T}^{-1}_{Ni}(s) a_{ij}(\xi(s))
\mathcal{T}^{-1}_{lj}(s)\overline{\Gamma}_{lm}(s)
\zeta'_m(s) \frac{{\rm d}s}{|\dot\xi(s)|}.
\end{aligned}
\label{integral}
\end{equation}
From (\ref{integral}) we derive the uniform bound $|\zeta_N(\tau)|\leq C|z'|$.

It remains to construct $\eta(\cdot)$ for  $\tau>T$ in such a way that it reaches the cycle
in a finite time. To this end we set
\begin{equation}
\label{fintest}
\eta(\tau)=
X\big(\zeta'(T)(T+1-\tau),z_N(\xi(\tau))+\zeta_N(T)|\dot\xi(\tau)|/|\dot\xi(T)|\big),\quad T\leq\tau\leq T+1.
\end{equation}
Then for every $i=1,\dots,N$
$$
\begin{aligned}
\dot\eta_i(\tau)&=\frac{\rm d}{{\rm d} \tau}
\bigl(\xi_i(\tau)+
{\mathcal T}_{iN}(\tau)\zeta_N(T)|\dot\xi(\tau)|/|\dot\xi(T)|\bigr)+O(|z^\prime|^2)
\\&=\dot\xi_i(\tau)+\ddot \xi_i(\tau)\zeta_N(T)/|\dot\xi(T)|+
O(|z^\prime|^2)
\\
&=b_i(\xi(\tau))+\frac{\partial b_i}{\partial
x_j}(\xi(\tau))\dot\xi_j(\tau)
\zeta_N(T)/|\dot\xi(T)|+O(|z^\prime|^2)=b_i(\eta(\tau))+O(|z^\prime|^2).
\end{aligned}
$$
 Therefore, the following bound holds
\begin{equation}
\label{hvost}
\int\limits_T^{T+1} a^{ij}(\eta(\tau))(-\dot\eta_i+b_i(\eta)) (-\dot\eta_j+b_j(\eta)){\rm d}\tau\leq C|z'|^4.
\end{equation}
Combining the last relation with (\ref{app_lagr}) yields (\ref{est_in_l8}).
\end{proof}

In order to construct a sub- and supersolutions of  (\ref{nHamJac}) we consider the
solution $\overline{D}(t)$ of the matrix equation
\begin{equation}
\label{dLyap}
\dot{\overline{D}} +\overline{D}(\overline{B}-4\overline{Q}\,\overline{\Gamma})
+(\overline{B}-4\overline{Q}\,\overline{\Gamma})^*\overline{D}
=-2I,
\end{equation}
given by
\begin{equation}
\label{defD}
\overline{D}(t)=2\int\limits_t^\infty\overline{ \Psi}^*(\tau,t) \overline{\Psi}(\tau,t){\rm d}\tau,
\end{equation}
where $\overline{\Psi}(\tau,t)$ is the fundamental matrix solution of
$$
\frac{\partial\overline{\Psi}}{\partial \tau}
=(\overline{B}(\tau)-4\overline{Q}(\tau)\overline{\Gamma}(\tau))\overline{\Psi},\quad
\overline{\Psi}(t,t)=I.
$$
As already mentioned in the proof of Lemma \ref{Lemupperb}
this solution $\overline{\Psi}(\tau,t)$ decays exponentially as $\tau\to +\infty$
and therefore the integral in  (\ref{defD}) converges. Then it defines a $P$-periodic positive symmetric
solution of (\ref{dLyap}). It follows from
(\ref{dRic}) and (\ref{dLyap}) that $\overline{\Gamma}^{\pm}_{\delta}:=\overline\Gamma\pm\delta \overline{D}$ satisfy
for sufficiently small $\delta>0$
$$
4 \overline{\Gamma}^+_{\delta}\,\overline{Q}\,\overline{\Gamma}^+_{\delta}-\frac{\rm d}{{\rm d}t}\overline{\Gamma}^+_{\delta}
-\overline{\Gamma}^+_{\delta}\,\overline{B}-\overline{B}^*\overline{\Gamma}^+_{\delta}\geq \delta I
\quad\text{and}\quad
4 \overline{\Gamma}^-_{\delta}\,\overline{Q}\,\overline{\Gamma}^-_{\delta}
-\frac{\rm d}{{\rm d}t}\overline{\Gamma}^-_{\delta}-\overline{\Gamma}^+_{\delta}\,\overline{B}-\overline{B}^*
\overline{\Gamma}^-_{\delta}\leq -\delta I.
$$
Now define the functions $W^{\pm}_{\delta}(z)$ by
$$
W^{\pm}_{\delta}(z)= (\overline{\Gamma}^\pm_{\delta}(t))_{ij}z'_i
z'_j\quad {\rm where} \  z_N=z_N(\xi(t)),
$$
these functions satisfy
\begin{equation}
\label{latterproperty} S(\nabla_z W^-_{\delta}(z),z)\leq
-\frac{\delta}{2}|z'|^2\quad \text{and}\quad S(\nabla_z
W^+_{\delta}(z),z)\geq \frac{\delta}{2}|z'|^2  \quad {\rm for\
sufficiently\ small\ } |z'|.
\end{equation}
The latter inequalities follow directly form the definitions of
$W^{-}_{\delta}(z)$ and $W^{+}_{\delta}(z)$.


 \begin{lem}
\label{Lemlowerb}
For sufficiently small $\delta>0$
the strict pointwise inequalities  $W_\delta^-(z)< W(X(z))<W_\delta^+(z)$ hold
for $z'$ from a punctured neighborhood of zero.
\end{lem}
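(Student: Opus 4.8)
The plan is to establish the two strict inequalities $W_\delta^-(z)<W(X(z))<W_\delta^+(z)$ in a punctured neighbourhood of $z'=0$ along the cycle, mimicking the two halves of the argument used for interior fixed points in Section \ref{FixP}, but now carrying the $t$-dependence through all the matrix objects. As in Lemma \ref{Lcompar}, it suffices to prove the non-strict inequalities $W_\delta^-\leq W(X(z))\leq W_\delta^+$ and then enlarge $\delta$ slightly; this removes the need to track lower-order remainders with perfect precision.

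For the upper bound $W(X(z))\leq W_\delta^+(z)$ I would invoke Lemma \ref{Lemupperb} directly: it gives $W(X(z',z_N(\xi(t))))\leq\overline{\Gamma}_{ij}(t)z'_iz'_j+C|z'|^3\log\frac1{|z'|}$, and since $\overline{\Gamma}^+_\delta=\overline\Gamma+\delta\overline D$ with $\overline D(t)$ symmetric positive definite and bounded below uniformly in $t\in[0,P)$, the extra term $\delta\,\overline D_{ij}(t)z'_iz'_j\geq c\delta|z'|^2$ dominates $C|z'|^3\log\frac1{|z'|}$ once $|z'|$ is small enough, giving the claimed inequality. For the lower bound $W_\delta^-(z)\leq W(X(z))$ I would run the Fenchel-inequality argument of Lemma \ref{Lcompar}: using that $W(X(z))=d_H(X(z),\mathcal{C})$ near the cycle (Step I / significance of $\mathcal{C}$), and that, by the same isolation argument as in \cite[Lemma 19]{PR} together with hypothesis (\ref{H1}), the minimizing test curves in (\ref{dfunction}) may be restricted to a small tube around $\mathcal{C}$, I write for any admissible curve $\eta$ from $X(z)$ to the cycle
\[
W_\delta^-(X(z))=-\int_0^t\nabla W_\delta^-(\eta)\cdot\dot\eta\,{\rm d}\tau
\leq\int_0^t\!\bigl(L(-\dot\eta,\eta)+H(\nabla W_\delta^-,\eta)\bigr){\rm d}\tau .
\]
Here one has to be slightly careful with endpoint terms: $W_\delta^-$ as a function on the cycle is not identically zero off $\{z'=0\}$, but since the curve ends on $\mathcal{C}$ where $z'=0$, the boundary term vanishes, exactly as the $\frac{\partial W_\delta^-}{\partial\nu}=0$ condition was used in Lemma \ref{Lcompar}. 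Then (\ref{latterproperty}) gives $H(\nabla W_\delta^-,\eta)=S(\nabla_z W_\delta^-,z)\leq-\frac\delta2|z'|^2\leq0$ along the tube, so $W_\delta^-(X(z))\leq\int_0^tL(-\dot\eta,\eta)\,{\rm d}\tau$, and taking the infimum over $\eta$ yields $W_\delta^-(X(z))\leq d_H(X(z),\mathcal{C})=W(X(z))$.

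The main obstacle is the lower-bound half, specifically justifying that the competing test curves stay in a fixed small tube $\{|z'|<\delta'\}$ with $\delta'$ independent of $\delta$, so that the pointwise differential inequality (\ref{latterproperty}) — which only holds for small $|z'|$ — can be applied all along each curve; this is the analogue of the observation in Lemma \ref{Lcompar} that a curve escaping a small neighbourhood would contradict the isolatedness of $\mathcal{C}$ as a component of $\mathcal{A}_H$ under (\ref{H1}), adapted from the fixed-point setting of \cite[Lemma 19]{PR} to a hyperbolic limit cycle. A secondary technical point is the correct handling of the $z_N$ (arc-length) coordinate: unlike the fixed-point case there is a genuine drift $\dot z_N\approx|\dot\xi(t)|$ along the cycle, and one must check that $W_\delta^\pm$, which depend on $z$ only through $z'$ and through $t=t(z_N)$ via the periodic matrices, are genuinely well-defined smooth functions in the tube and that $\nabla_z W_\delta^\pm$ is what enters $S$; but this is exactly the computation already recorded in (\ref{latterproperty}), so no new work is needed beyond bookkeeping.
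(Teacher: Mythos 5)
Your proposal is correct and follows essentially the same route as the paper: the upper bound is obtained by absorbing the $C|z'|^3\log\frac{1}{|z'|}$ remainder of Lemma \ref{Lemupperb} into the positive-definite gap $\delta\overline D_{ij}(t)z'_iz'_j$, and the lower bound is the Fenchel/strict-subsolution argument of Lemma \ref{Lcompar} (the paper cites Lemma 19 of \cite{PR}) combined with \eqref{latterproperty} and the isolation of $\mathcal{C}$ under \eqref{H1}. The only inaccuracy is cosmetic: since $\mathcal{C}\subset\Omega$, the curves stay off $\partial\Omega$, so the vanishing of the endpoint term comes solely from $W_\delta^-\equiv0$ on the cycle, not from any analogue of the Neumann condition used in Lemma \ref{Lcompar}.
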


\begin{proof} The first inequality $W_\delta^-(z)< W(X(z))$ can be proved similarly to Lemma 19 in \cite{PR}
(see also  the proof of Lemma \ref{Lcompar}), using (\ref{latterproperty}). The second inequality
$W(X(z))<W_\delta^+(z)$ follows immediately from Lemma \ref{Lemupperb}.
\end{proof}

At this point we have constructed functions $W_\delta^{\pm}$ satisfying conditions of Lemma \ref{MainTechLem}.
Next  we
define the test functions $W^{\pm}_{\delta,\ve}$ by
$$
W^{\pm}_{\delta,\ve}:=W_\delta^{\pm}-\ve\overline{\Phi}^{\pm}_\delta(t),
$$
where $z_N$ and $t$ are related by $z_N=z_N(\xi(t))$, and
$\overline{\Phi}^{\pm}_\delta(t)$ are periodic solutions  of the
ODEs
\begin{equation}
\label{phimin} \frac{\rm d}{{\rm d}
t}\overline{\Phi}^{\pm}_\delta(t)=-2{\rm
tr}(\overline{Q}(t)\overline{\Gamma}^{\pm}_\delta(t))+c(\xi(t))+\frac{2}{P}\int\limits_0^P{\rm
tr}(\overline{Q}(\tau)\overline{\Gamma}^{\pm}_\delta(\tau)){\rm
d}\tau- \frac{1}{P}\int\limits_0^Pc(\xi(\tau)){\rm d}\tau.
\end{equation}
The first two terms on the right-hand side here are introduced in order to compensate the discrepancy
of order $\ve$ in equation (\ref{nperturbed}). Indeed,
the test functions $W^{\pm}_{\delta,\ve}$ constructed in this way
satisfy for sufficiently small $|z'|$
\begin{equation*}
\begin{aligned}
{\pm}a_{ij} \left(X(z)\right) \alpha_{ki}
(z)&\frac{\partial}{\partial z_k}\biggl(
\alpha_{lj}(z)\frac{\partial W^{\pm}_{\delta,\ve}}{\partial
z_l}\biggr)
\mp
\frac{1}{\ve}S(\nabla_z W^\pm_{\delta,\ve},z)\mp c(X(z))\\
&\leq \frac{1}{P}\int\limits_0^P c(\xi(\tau)){\rm
d}\tau-\frac{2}{P}\int\limits_0^P{\rm
tr}(\overline{Q}(\tau)\overline{\Gamma}^{\pm}_\delta(\tau)){\rm
d}\tau +O(\ve+|z^\prime|).
\end{aligned}
\end{equation*}
In order to complete the proof
of the fact that $W^{\pm}_{\delta,\ve}$ satisfy
(\ref{forLowerBound}) and (\ref{forUpperBound}) it remains to
observe that $\int_0^P {\rm tr}
(\overline{Q}(\tau)\overline{\Gamma}^{\pm}_\delta(\tau)){\rm
d}\tau\to \int_0^P {\rm tr}
(\overline{Q}(\tau)\overline{\Gamma}(\tau)){\rm d}\tau$ as
$\delta\to 0$ and use the identity
$$
2\int_0^P {\rm tr}
(\overline{Q}(\tau)\overline{\Gamma}(\tau)){\rm d}\tau=\sum\limits_{\Theta_i>1}
\log\Theta_i
$$
(see Proposition 5.1 in \cite{PRR}), where $\Theta_i$ are
absolute values of eigenvalues of the linearized  Poincar\'{e} map
(corresponding to the ODE $\dot x=b(x)$ near $\mathcal{C}$).


\section{Construction of test functions: case of limit cycles on $\partial \Omega$}
\label{LimConboundary} In the case when ODE $\dot x=b_\tau(x)$ on
$\partial \Omega$ has a limit cycle  $\mathcal{C}$ which is
significant component of the Aubry set, the analysis combines the
ideas of Section \ref{FP} and Section \ref{LCin}. We pass to the
local coordinates in a neighborhood of $\mathcal{C}$ via a map
$x=X(z_1, \dots, z_{N-1}, z_N)$, where $z_N=z_N(x)$  is the
distance from $x$ to $\partial\Omega$ (positive for $x\in\Omega$)
and $(z_1, \dots, z_{N-1})$ are coordinates on $\partial\Omega$. The coordinate
$z_{N-1}(x)$ represents the arc length parametrization
on $\mathcal{C}$ and other
coordinates $z^\prime=(z_1, \dots, z_{N-2})$ are chosen so that
the map
$X(z^\prime,z_{N-1}, z_N)$ is $C^2$-smooth, moreover $z'=0$ when
$x\in \mathcal{C}$,
 and
 $\bigl( \frac{\partial X_i}{\partial z_j}(z)\bigr)_{i,j=\overline{1,N}}$
is an orthogonal matrix when $z_N=0$ and $z'=0$ (on the cycle).
 This change of coordinates leads to equations of the
form  (\ref{nHamJac}) and
 (\ref{nperturbed}) for $W(X(z))$ and $W_\ve(X(z))$.

We use the following ansatz for $W$,
\begin{equation}
\label{ansbcy} W(X(z))=\widehat\Gamma_{ij}(t)z'_i z'_j +o(|z'|^2),
\end{equation}
where $\widehat\Gamma$ is now $(N-2)\times (N-2)$ symmetric
$P$-periodic matrix ($P$ being the period of the cycle $\mathcal
C$), and $t$ refers to the parametrization $t\rightarrow \xi(t)$
of $\mathcal{C}$ such that $\dot\xi(t)=b_\tau(\xi(t))$. Moreover,
$\widehat{\Gamma}$ is chosen to be the maximal $P$-periodic solution
of the Riccati matrix equation
\begin{equation*}
\frac{\rm d}{{\rm d} t}\widehat{\Gamma}= 4\widehat{\Gamma} \widehat{Q}
\widehat{\Gamma} -\widehat{\Gamma} \widehat{B}- \widehat{B}^*\widehat{\Gamma},
\end{equation*}
with $(N-2)\times (N-2)$ matrices $\widehat{Q}(t)$ and $\widehat{B}(t)$
whose entries are given by the same formulas as (\ref{mQ}) and
(\ref{mB}).


\begin{lem}
For sufficiently small $|z'|$ and $|z_N|$ the following bound holds uniformly in $t\in[0,P)$
\begin{equation}
W(X(z',z_{N-1}(\xi(t)),z_N)\leq\widehat\Gamma_{ij}(t)z'_i z'_j
+C(|z'|^2\log\frac{1}{|z'|}+|z_N||z'|^2+|z_N|^3). \label{OTSENKA}
\end{equation}
\end{lem}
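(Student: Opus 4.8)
The plan is to prove (\ref{OTSENKA}) by exhibiting a single admissible curve in the variational formula (\ref{dfunction}), using that for the significant component under consideration $W(X(z))=d_H(X(z),\mathcal C)$ in a neighbourhood of $\mathcal C$ and $W|_{\mathcal C}=0$; thus any curve running from $X(z)$ to a point of $\mathcal C$ produces an upper bound for $W(X(z))$. The curve will be built in three consecutive stages, combining the descent construction of Section \ref{FP} with the spiralling construction of Lemma \ref{Lemupperb}. Throughout we use that, by (\ref{H3}), the linearized Poincar\'e map of $\mathcal C$ for $\dot x=b_\tau$ has no eigenvalue on the unit circle, so (as in Section \ref{LCin}, via \cite{AFIJ}) the Riccati equation for $\widehat\Gamma$ has the maximal $P$-periodic solution and the linearized flow $\dot\zeta'=(\widehat B-4\widehat Q\widehat\Gamma)\zeta'$ obeys $|\zeta'(\tau)|\le Ce^{-\delta(\tau-s)}|\zeta'(s)|$; and that $b_\nu>0$ on $\mathcal C$, i.e. in local coordinates $b_i(X(z))\mathcal T^{-1}_{Ni}(z)<0$ for small $|z|$ (cf. (\ref{H3newcoord})).

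Stage~1 (descent onto $\partial\Omega$): starting from $x=X(z',z_{N-1}(\xi(t)),z_N)$ with $z_N>0$, let $\eta$ solve $\dot\eta=b(\eta)$, $\eta(0)=x$. Since $\frac{{\rm d}}{{\rm d}\tau}z_N(\eta)=b_i(\eta)\mathcal T^{-1}_{Ni}\le -c<0$ near $\mathcal C$, this trajectory stays in $\Omega$, decreases its distance to $\partial\Omega$, and meets $\partial\Omega$ at a time $\Delta t=O(z_N)$, while $L(-\dot\eta,\eta)\equiv 0$ on $[0,\Delta t]$. As the transverse ($z'$-)components of $b$ vanish on $\mathcal C$ and its arc-length component is $O(1)$, the landing point is $X(\widehat z',z_{N-1}(\xi(\widehat t)),0)$ with $\widehat z'=z'+O(|z'|z_N+z_N^2)$ and $\widehat t=t+O(z_N)$, so that, using $|z'|z_N^2\le z_N|z'|^2+z_N^3$,
$$
\widehat\Gamma_{ij}(\widehat t)\,\widehat z'_i\widehat z'_j\le\widehat\Gamma_{ij}(t)\,z'_iz'_j+C\bigl(z_N|z'|^2+z_N^3\bigr).
$$
Thus Stage~1 costs nothing and only perturbs the data by terms already present on the right of (\ref{OTSENKA}).

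Stage~2 (spiralling to $\mathcal C$ along $\partial\Omega$) and Stage~3 (reaching $\mathcal C$): from the boundary point above we repeat the construction of Lemma \ref{Lemupperb}, now carried out on the closed hypersurface $\partial\Omega$ and for the ODE $\dot x=b_\tau$ — we keep $\eta$ on $\partial\Omega$, take its transverse coordinate $\zeta'$ to solve $\dot\zeta'=(\widehat B-4\widehat Q\widehat\Gamma)\zeta'$ with $\zeta'(\Delta t)=\widehat z'$, advance the arc-length coordinate as $z_{N-1}(\xi(\tau))$ up to a correction $\zeta_{N-1}$ with $|\zeta_{N-1}|\le C|z'|$ (the analogue of (\ref{auxODEN})--(\ref{integral})), and in the Skorohod problem choose $\alpha(\tau)\ge 0$ cancelling the normal part of $-\dot\eta+b(\eta)$ — admissible precisely because $b_\nu>0$ on $\mathcal C$, and leaving the same effective tangential Lagrangian whose Legendre dual yields $\widehat Q,\widehat B$. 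Repeating the computation that leads to (\ref{app_lagr}) with $T=\frac1\delta\log\frac1{|z'|}$ gives a cost $\widehat\Gamma_{ij}(\widehat t)\widehat z'_i\widehat z'_j+C|z'|^3\log\frac1{|z'|}$ and leaves $\eta$ within $O(|z'|^2)$ of $\mathcal C$; appending, as in (\ref{fintest})--(\ref{hvost}), a curve on $\partial\Omega$ driving $\zeta'$ linearly to $0$ over $[T,T+1]$ reaches $\mathcal C$ at an extra cost $O(|z'|^4)$. Summing the three stages and invoking the Stage~1 comparison yields $W(X(z))\le\widehat\Gamma_{ij}(t)z'_iz'_j+C(|z'|^3\log\frac1{|z'|}+z_N|z'|^2+z_N^3)$, hence (\ref{OTSENKA}). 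The principal difficulty is Stage~2: one must check that forcing the curve to remain on $\partial\Omega$ through the reflection term $\alpha\nu$ reproduces, to the required order, the Riccati/Lyapunov calculus of Section \ref{LCin} — i.e. that the normal degree of freedom genuinely decouples thanks to $b_\nu>0$ — while simultaneously controlling the arc-length correction $\zeta_{N-1}$, as in (\ref{preobrazov})--(\ref{integral}), so that the whole curve stays inside the coordinate chart around $\mathcal C$.
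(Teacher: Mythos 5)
Your proposal follows the same overall strategy as the paper: exhibit an admissible Skorohod test curve in the variational formula (\ref{dfunction}), built in stages, with a descent phase for $z_N>0$ followed by a boundary phase that reproduces the spiralling construction of Lemma~\ref{Lemupperb} adapted to the hypersurface $\partial\Omega$ and the tangential vector field $b_\tau$. The paper proves the Lemma in exactly this order, treating $z_N=0$ first (with the explicit control $v=\dot\eta+\nu(b_\nu+R_1)$ and then $v=\dot\eta+\nu b_\nu$, checking $\alpha\ge 0$ via $b_\nu>0$), then appending a descent curve for $z_N>0$.

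The one genuine difference lies in the descent stage. The paper descends along $\eta(\tau)=X\bigl(z',z_{N-1}(\xi(\tau)),z_N+b_\nu(\xi(t))(t-\tau)\bigr)$, i.e.\ a straight-line descent in the $z_N$-coordinate at constant speed $b_\nu(\xi(t))$ with the transverse coordinate $z'$ frozen; this costs $C(|z'|^2+z_N^2)z_N$ in Lagrangian but lands exactly at $(z',z_{N-1}(\xi(t+\Delta t)),0)$, so no perturbation of $z'$ needs to be tracked. You instead follow the exact $b$-flow $\dot\eta=b(\eta)$: this has zero Lagrangian cost, but the landing point is perturbed, $\widehat z'=z'+O(|z'|z_N+z_N^2)$ and $\widehat t=t+O(z_N)$, and you absorb these perturbations into the right-hand side of (\ref{OTSENKA}) by the elementary inequality $|z'|z_N^2\le\tfrac12(|z'|^2 z_N+z_N^3)$. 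Both routes yield the required bound (in fact both give the stronger $|z'|^3\log\frac1{|z'|}$ in place of the stated $|z'|^2\log\frac1{|z'|}$), and the trade-off is merely between paying a small Lagrangian cost on a curve with frozen transverse data versus paying nothing on the true trajectory but then accounting for the drift of the landing coordinates. Your version is perhaps slightly more natural; the paper's is slightly simpler bookkeeping. Your remark at the end correctly identifies the only nontrivial point of Stage~2, namely that choosing $\alpha$ to cancel the normal part of $-\dot\eta+b$ leaves a purely tangential Lagrangian whose Riccati/Lyapunov calculus mirrors Section~\ref{LCin}, and that this is admissible because $b_\nu>0$ on $\mathcal C$; the paper handles exactly this via the explicit $R_1(\tau)$ and the sign condition on $\alpha$.
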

\begin{proof} First consider the case $z_N=0$. As in Lemma
\ref{Lemupperb} we use representation (\ref{dfunction}) and
consider the solution $\zeta^\prime(\tau)$ of the ODE
$\dot\zeta^\prime(\tau)=(\widehat{B}-4\widehat{Q}\widehat{\Gamma})\zeta(\tau)$
for $\tau>t$ with the initial condition
$\zeta^\prime(t)=z^\prime$. It decays exponentially as
$\tau\to\infty$, $|\zeta^\prime|\leq C e^{-\delta \tau}
|z^\prime|$ for some $\delta>0$. Next we introduce $\zeta_{N-1}$
analogously to $\zeta_{N}$ introduced in Lemma \ref{Lemupperb}, i.e.
$\zeta_{N-1}$ solves
\begin{equation*}
\begin{aligned}
\dot\zeta_{N-1}(\tau)&=\mathcal{T}^{-1}_{(N-1)i}\bigl(0,z_{N-1}(\xi(\tau),0\bigr)\frac{\partial
b_i}{\partial x_j}(\xi(\tau))
\biggl( \mathcal{T}_{j(N-1)} (\tau) \zeta_{N-1}(\tau)+\mathcal{T}_{jl}(\tau) \zeta'_l(\tau)\biggr) \\
&-\mathcal{T}^{-1}_{(N-1)i}\bigl(0,z_{N-1}(\xi(\tau)),0\bigr) \biggl( \mathcal{\dot T}_{i(N-1)}(\tau) \zeta_{N-1}(\tau)+ \mathcal{\dot T}_{il}(\tau) \zeta'_l(\tau)\biggr) \\
&-4 \mathcal{T}^{-1}_{(N-1)i}\bigl(0,z_{N-1}(\xi(\tau)),0\bigr)
a_{ij}(\xi(\tau))
\mathcal{T}^{-1}_{lj}\bigl(0,z_{N-1}(\xi(\tau)),0\bigr)\widehat{\Gamma}_{lm}(\tau)
\zeta'_m(\tau),\quad\tau>t,
\end{aligned}
\end{equation*}
where $(\mathcal{T}^{-1}_{ij}(z))_{i,j=\overline{1,N}}$ is the matrix
inverse  to $(\frac{\partial X_i}{\partial
z_j}(z))_{i,j=\overline{1,N}}$ and
$\mathcal{T}_{ij}(\tau)=\frac{\partial X_i}{\partial
z_j}(0,z_{N-1}(\xi(\tau)),0)$.
Finally we define $\eta(\tau)$ by
$$
\eta(\tau)=
\begin{cases}
X(\zeta^\prime(\tau), z_{N-1}(\xi(\tau))+\zeta_{N-1}(\tau), 0),\
t\leq\tau<T
\\
X(\zeta^\prime(T)(T+1-\tau), z_{N-1}(\xi(\tau))+\zeta_{N-1}(T)
|\dot\xi(\tau)|/|\dot\xi(T)|, 0),\ T\leq\tau<T+1,
\end{cases}
$$
with $T:=\frac{1}{\delta}\log\frac{1}{|z^\prime|}$, and the
control $v(\tau)$ by
\begin{equation*}
v(\tau)=
\begin{cases}
\dot\eta+\nu(\eta)\bigl(b_\nu(\eta)+ R_1(\tau)\bigr),\ \ t\leq\tau<T\\
\dot\eta+\nu(\eta)b_\nu(\eta),\ \ T\leq\tau\leq T+1,
\end{cases}
\end{equation*}
where $z_{N-1}=z_{N-1}(\xi(\tau))$, and 
$$
R_1(\tau)=
-\mathcal{T}^{-1}_{(N-1)i}\bigl(0,z_{N-1},0\bigr) \Big( \mathcal{\dot T}_{i(N-1)}(\tau) \zeta_{N-1}(\tau)+ \mathcal{\dot T}_{il}(\tau) \zeta'_l(\tau)
+4a_{ij}(\xi(\tau))
\mathcal{T}^{-1}_{lj}(0,z_{N-1},0)\widehat\Gamma_{lm}\zeta^\prime_m\Big).
$$
Letting
$$
\alpha(\tau):=\begin{cases}
b_\nu(\eta)+R_1(\tau),\ \ t\leq\tau<T \\
b_\nu(\eta),\ \ T\leq\tau\leq T+1,
\end{cases}
$$
observe that for this control $v(\tau)$
the pair  $(\eta(\tau),\, \alpha(\tau))$
solves (\ref{Skor}) on $(t,T+1)$ with the initial value
$\eta(t)=x(=X(z^\prime,\xi(t),0))$, as far as $\alpha(\tau)\geq 0$ for all
$\tau\in (t,T+1)$. Since $b_\nu(\xi(\tau))>0$, the latter condition is satisfied,
 provided that $|z^\prime|$ is
sufficiently small. Then the proof of (\ref{OTSENKA}) follows exactly
the line of Lemma \ref{Lemupperb}.

In the case when $z_N(x)>0$ we construct a curve $\eta(\tau)$
connecting $x$ with a point $y$ on $\partial \Omega$ by setting
$$
\eta(\tau)=X(z^\prime,z_{N-1}(\xi(t)),z_N+b_\nu(\xi(t))(t-\tau)) \
\text{for all $\tau\geq t$ such that}\ z_N+b_\nu(\xi(t))(t-\tau)\geq
0.
$$
Let $t+\Delta t$ be the time when  $\eta(\tau)$ reaches $\partial \Omega$
(at the point $y=\eta(t+\Delta t)$) then  $\Delta t=O(z_N)$. It follows from the
construction of $\eta(\tau)$ that
$$
\int_t^{t+\Delta
t}a^{ij}(\eta(\tau))(-\dot\eta_i+b_i(\tau))(-\dot\eta_j+b_j(\eta))\,{\rm
d}\tau\leq C (|z^\prime|^2+z_N^2)z_N.
$$
Then extending $\eta(\tau)$ along $\partial\Omega$ as described
above we complete the proof of the Lemma.
\end{proof}

Now we construct test functions
$W^{\pm}_\delta(z^\prime,z_{N-1}(\xi(t)),z_N) :=(\widehat\Gamma\pm
\delta \widehat D)_{ij}(t)z_i^\prime z_j^\prime\pm \delta z_N^2$ for
(sufficiently small) $\delta>0$, where the $P$-periodic symmetric
matrix $\hat D(t)>0$ is defined analogously  to (\ref{defD}). Then
\begin{equation*}
S(\nabla_z W^{-}_\delta,z)\leq -\delta (|z^\prime|^2+b_\nu(\xi(t))z_N)
\end{equation*}
for sufficiently small $|z^\prime|$ and $z_N\geq 0$.
This yields the following bound
$$
W^{-}_\delta<W(X(z)) \ \ \text{for sufficiently small}  \ \ |z^\prime| \ \ \text{and}\ \ z_N \ \text{(when $|z^\prime|+z_N>0$)}
$$
whose proof is analogous to that  of the lower bound in Lemma
\ref{Lcompar}. Thus functions $W^{\pm}_\delta$ satisfy the conditions
of Lemma \ref{MainTechLem}. Finally, we define the test functions
$W^{\pm}_{\delta,\ve}$ by
$$
W^{\pm}_{\delta,\ve}:=W^{\pm}_\delta\pm\ve\widehat\Phi^{\pm}_\delta\mp\ve^2z_N,\
\text{where}\ z_{N-1}=z_{N-1}(\xi(t)),
$$
with $\widehat\Phi^{\pm}_\delta$ being solutions of
\begin{equation*}
\frac{\rm d}{{\rm d} t}\widehat{\Phi}^{\pm}_\delta(t)=-2{\rm
tr}(\widehat{Q}(t)\widehat{\Gamma}^{\pm}_\delta(t))+c(\xi(t))+\frac{2}{P}\int\limits_0^P{\rm
tr}(\widehat{Q}(\tau)\widehat{\Gamma}^{\pm}_\delta(\tau)){\rm d}\tau-
\frac{1}{P}\int\limits_0^P c(\xi(\tau)){\rm d}\tau.
\end{equation*}
These functions $W^{\pm}_{\delta,\ve}$ satisfy the conditions of Lemma \ref{MainTechLem}.

\bigskip\noindent
{\bf Acknowledgements.}\ This work was completed during
the visit of V.~Rybalko at the Narvik University College.
He is indebted for the kind hospitality and financial support.

\bigskip

\bigskip

\end{document}